\newtheorem{thm}{Theorem}[section]
\newtheorem{lemma}[thm]{Lemma}
\begin{document}

%\begin{frontmatter}

\title{Cubic Thue Equations}

\author{Shabnam Akhtari}
\address{Shabnam Akhtari\newline
Max-Planck-Institut f\"{u}r Mathematik\newline
 P.O.Box: 7280,
53072 Bonn,
Germany\newline}
 \email {akhtari@mpim-bonn.mpg.de}

\subjclass[2000]{11D25, 11J86}
\begin{abstract}
 We revisit a work by R.~Okazaki and prove that for every cubic binary form $F(x, y)$ with large enough discriminant, the Thue equation $|F(x , y)| = 1$ has at most $7$ solutions in integers $x$ and $y$. 
\end{abstract}
\keywords{Cubic Thue Equations, Thue-Siegel Method, Linear Forms in Logarithms}
%\begin{document}
%\maketitle

\maketitle

\section{Introduction}\label{6s1}

Let $F(x , y)$ be an irreducible binary cubic form with integral coefficients and negative discriminant. More than 80 years ago, Delone and Nagell established independently  that the equation
\begin{equation}\label{61.2}
|F(x , y)| = 1
\end{equation}
has at most five solutions in integers $x$, $y$. This result is proved by considering units in the algebraic number field $\mathbb{Q} (\rho)$, where $\rho$ is the real root of $F(x , 1) = 0$.  In their proofs  the fact that the group of units in the ring of integers of $\mathbb{Q}(\rho)$ is generated by one fundamental unit is essential.

The situation where the discriminant of $F(x , y)$ is positive is complicated by the fact that the number field $\mathbb{Q}(\rho)$ (where $\rho$ is any real root of $F(x , 1) = 0$) has a ring of integers generated by a pair of fundamental units. However, it is possible to reduce (\ref{61.2}) to a set of exponential equations to which a local method of Skolem can be applied. In this way, Ljunggren  \cite{6Lj} and Baulin \cite{6Ba}, solved (\ref{61.2})  for $F(x , y) = x^{3} - 3 x y^{2} + y^{3}$ of discriminant $81$ and $F(x , y) = x^{3} + x^{2}y -2xy^{2} - y^{3}$ of discriminant  $49$, respectively. In the first case there are $6$ solutions and in the second case there are $9$ solutions to (\ref{61.2}).

In 1929, Siegel \cite{6Si} used the theory of Pad\'e
approximation to binomial functions (via the hypergeometric functions), to show for $F$ cubic of positive discriminant, that equation (\ref{61.2}) has at most $18$ solutions in integers $x$ and $y$. Refining these techniques, Evertse \cite{6Ev} reduced this upper bound to $12$. Later, Bennett \cite{6Ben} showed that if $F(x, 1)$ has at least two distinct complex roots, then the equation $F(x , y) = 1$ possesses at most $10$ solutions in integers $x$ and $y$. In 2003, by studying the geometry of numbers in the ``logarithmic space'', Okazaki \cite{6Ok} proved that if discriminant of $F$ is greater than $5.65 \times 10^{65}$ then equation (\ref{61.2}) has at most $7$ solutions. Okazaki's method is essentially different from Evertse's. 
In this paper, we will  relate some geometric ideas of Okazaki \cite{6Ok} to the method of Thue-Siegel as refined by Evertse \cite{6Ev}, in conjunction with lower bounds for linear forms in logarithms of algebraic numbers. The following are the main results of this paper:
\begin{thm}\label{6main} 
  If $F(x , y)$ is a binary cubic form with discriminant $D > 1.4 \times 10^{57}$,  then the equation
  $$
\left|F(x , y)\right| = 1
$$
  possesses at most $7$ solutions in integers $x$ and $y$.
  \end{thm}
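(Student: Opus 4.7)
The plan is to combine Okazaki's geometric analysis of solutions in the logarithmic unit space with sharper Diophantine input from the Thue--Siegel method and from Baker-type lower bounds for linear forms in logarithms, with the goal of lowering the discriminant threshold from Okazaki's $5.65\times10^{65}$ to $1.4\times10^{57}$.

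First I would fix notation: let $\alpha_1,\alpha_2,\alpha_3$ be the roots of $F(x,1)=0$ and, to each integer solution $(x,y)$ of $|F(x,y)|=1$, associate the linear form $\beta=x-\alpha_i y$ of smallest absolute value; the index $i$ defines the \emph{type} of the solution. Since $\beta\overline{\beta}_1\overline{\beta}_2=\pm1$, each solution corresponds to a unit in the order $\mathbb{Z}[\alpha_i]$, and the logarithmic map $\ell(\beta)=(\log|\beta^{(1)}|,\log|\beta^{(2)}|,\log|\beta^{(3)}|)$ places these units on the trace-zero hyperplane as a subset of a rank-$2$ lattice whose covolume is controlled by the regulator of $\mathbb{Q}(\rho)$.

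Next I would fix a height cutoff $Y_0=Y_0(D)$ and split the solutions into \emph{small} solutions, with $\max(|x|,|y|)\le Y_0$, and \emph{large} solutions. For the large solutions, following Evertse's refinement of the Thue--Siegel method, I would use Pad\'e approximations to the binomial function $(1-z)^{1/3}$ to extract a strong gap principle between successive large solutions of the same type; combined with Okazaki's packing argument in the rank-$2$ logarithmic lattice this caps the total contribution of large solutions to at most a few per type. For the small solutions, I would invoke a Baker-type lower bound for a linear form in three logarithms applied to the ratio $(x-\alpha_i y)/(x-\alpha_j y)$, which excludes all but a controlled number of $(x,y)$ in the intermediate range and rules out sporadic solutions once the discriminant exceeds the claimed threshold. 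Summing the type-by-type counts and isolating the universal small solutions (such as $(1,0)$ and $(0,\pm1)$) should close the proof at the target $7$.

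The main obstacle, I expect, will be calibrating $Y_0(D)$ and the auxiliary hypergeometric parameters so that the three inputs---Okazaki's logarithmic geometry, the Thue--Siegel gap, and the linear-forms-in-logarithms bound---mesh cleanly and force the count to be $\le 7$ rather than $\le 8$ or $\le 9$. In particular, guaranteeing that the gap principle yields at most \emph{one} large solution per type beyond a small initial batch, while simultaneously keeping the constants from the Baker-type estimate compatible with $D>1.4\times10^{57}$, is where the bulk of the delicate bookkeeping will live; the improvement over Okazaki should come precisely from replacing part of his geometric bound by the sharper Pad\'e-based gap.
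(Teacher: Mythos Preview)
Your proposal diverges substantially from the paper's actual argument, and your diagnosis of where the improvement over Okazaki comes from is off. The paper makes no use of Pad\'e approximation or any height-based small/large dichotomy. Instead, the proof works entirely in Okazaki's $(t,s)$-coordinates on the logarithmic plane: to each solution $(x,y)$ of a fixed type (related to a fixed pair of resolvent forms $(\xi,\eta)$) one attaches $t=-\tfrac{\sqrt6}{2}\log|u_3|$ and $s=\tfrac{1}{\sqrt2}\log|u_1/u_2|$. The geometric input is Okazaki's area bound: if three solutions of one type give non-collinear points $(t,s)$, $(t',s')$, $(t'',s'')$ with $t\le t'\le t''$, then the triangle they span has area at least $\tfrac12\mathrm{Vol}(\Lambda)$, and together with the decay $|s|<e^{-\sqrt6 t/2}/\sqrt2$ this forces $t''\gtrsim \sqrt2\,\mathrm{Vol}(\Lambda)\,e^{\sqrt6 t/2}$. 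The improvement over Okazaki's $5.65\times10^{65}$ comes from pairing this \emph{lower} bound with Matveev's explicit lower bound for the linear form $\log|u_1''/u_2''|=\log\lambda_1+m\log\lambda_2+n\log\lambda_3$, which yields an \emph{upper} bound $t''\le C\,\mathrm{Vol}(\Lambda)\,A_1\log(C'A_1A_2)$ with $A_1$ linear in $t$ and $A_2A_3\le\tfrac{2}{\sqrt3}\mathrm{Vol}(\Lambda)$. The factor $\mathrm{Vol}(\Lambda)$ cancels between the two bounds, leaving a transcendental inequality in $t$ alone whose solution gives $t<27.6$ and hence, via the Hessian estimate $D\lesssim e^{2\sqrt6 t}$, the threshold $D<1.4\times10^{57}$. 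With at most two solutions per type other than the exceptional minimal-norm solution $(x_0,y_0)$, one gets $3\cdot 2+1=7$.

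Your plan to derive the gap from Pad\'e approximation rather than from Okazaki's area argument would forfeit exactly this cancellation of $\mathrm{Vol}(\Lambda)$: Evertse-style hypergeometric gap principles relate successive Hessian values to one another, not to the regulator, so meshing them with a Baker-type upper bound that carries $A_2A_3\approx\mathrm{Vol}(\Lambda)\approx R_K$ leaves an uncontrolled regulator factor and hence no absolute threshold on $D$. Note also that the paper applies the linear-forms-in-logarithms estimate to the \emph{largest} $t''$, not to small solutions; there is no intermediate range to clear out. The phrase ``Thue--Siegel as refined by Evertse'' in the introduction refers only to the algebraic machinery of resolvent forms $\xi,\eta$ (which the paper recasts via $p,q,u_i$), not to Pad\'e approximants; if you pursue your outline as written, the likely failure point is that the regulator will not drop out and you will obtain a bound depending on $R_K$ rather than on $D$ alone.
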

  
\begin{thm}\label{66}
Let $F(x , y)$ be a cubic form with discriminant $D > 9 \times 10^{58}$. If $F (x , y)$ is equivalent to a reduced form which is not monic, then the equation $F(x , y) = 1$ posses at most $6$ solutions in integers $x $ and $y$.
\end{thm}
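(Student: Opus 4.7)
Because $GL_2(\mathbb{Z})$-equivalence preserves the number of solutions to $|F(x,y)|=1$, we may replace $F$ by its reduced form and assume $F(x,y) = a_0 x^3 + a_1 x^2 y + a_2 x y^2 + a_3 y^3$ is itself reduced with $|a_0| \geq 2$. An immediate consequence is that $F(\pm 1, 0) = \pm a_0 \neq \pm 1$, so the two ``trivial'' integer points with $y=0$ are ruled out: every solution $(x,y)$ satisfies $y \neq 0$, and the associated algebraic integer $\xi := a_0 x - a_0 \rho y \in \mathbb{Q}(\rho)$ has norm $\pm a_0^2$ rather than $\pm 1$. Thus the $\xi$'s attached to solutions of Theorem \ref{66} are not units, only elements of bounded norm, and they sit in a lattice strictly coarser than the unit lattice that governs the monic case.

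My strategy is to re-run the proof of Theorem \ref{6main} and locate the single step that becomes sharper under this hypothesis. That proof, as outlined in the introduction, combines Okazaki's logarithmic-space geometry of units in $\mathbb{Q}(\rho)$ with the Thue-Siegel-Evertse Pad\'e approach, supplemented by Baker-type lower bounds for linear forms in logarithms. Solutions are partitioned according to which archimedean place of $\mathbb{Q}(\rho)$ makes $|x-\rho^{(i)}y|$ small (the ``type'' of the solution), the Pad\'e/gap principle caps the number of large solutions per type, and a direct inspection around the origin of the logarithmic lattice accounts for the remaining small solutions. I expect the bound $7$ of Theorem \ref{6main} to decompose as $6$ generic contributions plus a single ``small'' contribution forced by the possible lattice point at the origin, which exists precisely when $F$ is equivalent to a monic reduced form (giving the solution $(\pm 1,0)$). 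Under the non-monic hypothesis that last contribution is empty, and the bound drops to $6$.

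The main obstacle will be twofold. First, I must verify that the count in Theorem \ref{6main} genuinely admits a clean $6+1$ decomposition, with no overlap between the ``small'' and ``large'' buckets and no case in which two small solutions can appear. Second, I will have to re-examine the linear-forms-in-logarithms estimates (of Laurent-Mignotte-Nesterenko type) with the weaker information available when $\xi$ is only known to have norm $\pm a_0^{2}$, rather than being a unit; this is almost certainly what forces the discriminant threshold up from $D > 1.4 \times 10^{57}$ in Theorem \ref{6main} to $D > 9 \times 10^{58}$ here. These bookkeeping and numerical recalibration steps, rather than any new ingredient, appear to be the technical heart of the argument, and the Julia-reduction inequalities on $a_0,\ldots,a_3$ provided by the ``reduced but not monic'' hypothesis are what make them go through.
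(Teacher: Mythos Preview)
Your $7=6+1$ decomposition is the right high-level picture, and the paper's argument for Theorem~\ref{66} is indeed obtained by re-running the proof of Theorem~\ref{6main} with one estimate replaced. But you have misidentified both what the ``$+1$'' is and which estimate changes.

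The exceptional solution in the paper is not a solution that ceases to exist when the reduced form is non-monic. The paper keeps $F$ monic throughout and continues to work with units $x-\rho y$; it never passes to a form with $|a_0|\ge 2$ or to algebraic integers of norm $\pm a_0^{2}$, and no Julia-reduction inequalities are used. The role of the non-monic hypothesis is purely through Lemma~\ref{6mike} and its Remark: if the equivalent reduced form is not monic, then the Hessian bound $H(x,y)\ge\frac12\sqrt{3D}$ holds for \emph{every} solution, so Theorem~\ref{6dis} (the inequality $D\le c\,e^{2\sqrt{6}t}$) has no exceptional solution. In the proof of Theorem~\ref{6main} the minimiser $(x_0,y_0)$ of $\|\vec u\|$ was set aside precisely because Theorem~\ref{6dis} might fail for it; once that failure is ruled out, $(x_0,y_0)$ can be absorbed into the ``at most two per resolvent pair'' count, giving $3\times 2=6$.

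The threshold rises from $1.4\times10^{57}$ to $9\times10^{58}$ not because of any non-unit complication, but because one of the height estimates in Matveev's theorem must be replaced. In Theorem~\ref{6main} the bound $A_2=\|\tau(\lambda_2)\|\le 6.01\,t$ was obtained from \eqref{6mn} using that the smallest of the three solutions satisfied $(x,y)\neq(x_0,y_0)$, so that at least one exponent $m',n'$ was nonzero. When $(x_0,y_0)$ itself may be among the three solutions this argument is unavailable, and the paper instead invokes Louboutin's regulator bound (via the analytic class number formula)
\[
\textrm{Vol}(\Lambda)\ \le\ \frac{\sqrt{3}}{8}\sqrt{D}\,\log^{2}D,
\]
which together with $A_2^{2}\le\frac{2}{\sqrt3}\textrm{Vol}(\Lambda)$ and Theorem~\ref{6dis} gives $A_2\le\frac14 e^{\sqrt{6}t/2}(2\sqrt{6}\,t+\log\frac12)$. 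Feeding this $A_2$ (and the unchanged $A_1$) into inequality~\eqref{6mi} yields $t\le 28.38$ and $D\le 9\times10^{58}$. So the new ingredient you need is Louboutin's bound, not a recalibration of the linear-forms machinery for non-units.
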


Despite the numerical improvement, the bounds remain out of reach of computers. The main purpose of this paper, is to look at some beautiful geometric ideas of Okazaki  \cite{6Ok} from Classical Analysis point of view.

In 1990, using the fact that the underlying number fields are the so-called ``simplest cubics", Thomas \cite{6Tho} showed that  the equations 
$$
G_{1 , n}(x , y) = x^{3} + n x^{2} y - (n + 3) x y^{2} + y ^{3}  = 1
$$
 have only the solutions $(1, 0)$ , $(0 , 1)$ and $(-1 , -1)$ in integers, provided $n \geq 1.365 \times 10^{7}$.  This restriction was later removed by Mignotte \cite{6Mig} except for the equation with 
$$
n \in \{ -1, 0 ,2\}.
$$
 It is known that $G_{1 , n}(x , y) = 1$ has $9$ solutions for $n = -1$  (\cite{6Ba}), $6$ solutions for $n = 0$ (\cite{6Lj}) and $6$ solutions for $n =1$  (\cite{6Ga}) .

Define $F_{m}(x , y)$ by 
$$
F_{m}(x , y) = x^{3} - (m + 1) x^{2} y + m x y^{2} + y^{3}
$$
for $m \in \mathbb{Z}$. Provided $m \neq -2$  , $-1$ or $1$ , the equation $F_{m}(x , y) = 1$ has the five distinct integral solutions $(x , y) = (1 , 0)$ , $(1 , 1)$ , $(1 , -m -1)$ , $(0 , 1)$  and $(m , 1)$. That this list is complete was proven, independently, by Lee \cite{6Lee} and Mignotte and Tzanakis \cite{6Tz}, for $m$ suitably large and later, by Mignotte \cite{6Mig1}, for $m > 2$. The cases $m = 0$ and $m =1$ correspond to discriminant $-23$ and $-31$, respectively.   

All known irreducible cubic forms $F(x , y)$, for which the equation (\ref{61.2}) has more than $5$ solutions, have discriminant less than $362$.

The following conjecture is essentially due to Nagell and refined by Peth\"o and Lippok.

\textbf{Conjecture} If $F$ is a binary cubic form with positive discriminant $D_{F}$, then the number of solutions of equation (\ref{61.2}) is less than $6$, if $D_{F}  >  361$.

     %%%%%%%%%%%%%%%%%%%%%%%%%%%%%%%%%%

\section{The Covariants  of Binary Cubic Forms }\label{6s2}
Suppose  
$$F = ax^{3} + bx^{2}y + cxy^{2} + dy^{3}$$ 
with discriminant  
$$D_{F} = 18 a b c d + b^{2} c^{2} - 27 a^{2} d^{2} - 4 a c^{3} - 4 b^{3} d = a^{4} \prod_{i , j} (\alpha_{i} - \alpha_{j})^{2} 
$$
where $\alpha_{1}$, $\alpha_{2}$ and $\alpha_{3}$ are the roots of polynomial $F(x , 1)$.
Let us define, for the form $F$, an associated quadratic form, the Hessian $H = H_{F}$, and a cubic form $G = G_{F}$, by 
\begin{eqnarray*}
H(x , y) & = & - \frac{1}{4} \left(\frac{\delta^{2}F}{\delta x^{2}}\frac{\delta^{2}F}{\delta y^{2}} -\left( \frac{\delta ^{2}F}{\delta x \delta y}\right)^{2}\right) \\ \nonumber
& = &  A x^{2} + B x y + C y^{2} 
\end{eqnarray*}
and
$$
G(x,y) = \frac{\delta F}{\delta x} \frac{\delta H}{\delta y} - \frac{\delta F}{\delta y} \frac{\delta H}{\delta x}.
$$
These forms satisfy a covariance property; i.e.
$$
H_{F \circ \gamma } = H_{F} \circ \gamma \qquad  \textrm{and} \qquad  G_{F \circ \gamma } = G_{F} \circ \gamma
$$
for all $\gamma \in GL_{2}(\mathbb{Z})$.

We call forms $F_{1}$ and $F_{2}$ equivalent if they are equivalent under $GL_{2} (\mathbb{Z})$-action; i.e. if there exist integers $a_{1}$ , $a_{2}$ , $a_{3}$ and $a_{4}$ such that 
$$
F_{1}(a_{1} x + a_{2}y , a_{3}x + a_{4}y) = F_{2} (x , y) 
$$
for all $x$, $y$ , where $a_{1} a_{4} - a_{2} a_{3} = \pm 1$.

We denote by $N_{F}$ the number of solutions in integers $x$ and $y$ of the Diophantine equation (\ref{61.2}). If $F_{1}$ and $F_{2}$ are equivalent, then $N_{F_{1}} = N_{F_{2}}$  and  $D_{F_{1}} = D_{F_{2}}$. Therefore, we can  assume that $F$ is monic (the coefficient of $x^{3}$ in $F(x , y)$ is 1).

For
$F(x,y) = ax^{3} + bx^{2}y + cxy^{2} + dy^{3}$ with discriminant $D$, it follows by routine calculation that 
$$A = b^{2} -3 a c  ,\ B = b c - 9a d ,\   C = c^{2} -3 b d $$
and 
$$ B^{2} - 4 AC = -3 D .$$

Further, these forms are related to $F (x , y)$ via the identity
\begin{equation}\label{65.1}
4 H(x , y)^{3} = G (x , y) ^{2} + 27 D F(x , y) ^{2}
\end{equation}

Binary cubic form $F$ is called \emph{reduced}  if the Hessian  
$$H(x , y) = A x^{2} + B x y + C y^{2}$$ 
of $F$,
satisfies
$$
C \geq A \geq |B|
$$
It is a basic fact (see \cite{6Del}) that every cubic form of positive discriminant is equivalent to a reduced form  $F(x , y)$ .
The reader is directed to \cite{6Del} (chapter III and supplement I) for more details on reduction.
We will later use the following lemma  to bound the discriminant $D$ from above.
\begin{lemma}\label{6mike}
Let $F$ be an irreducible binary cubic form with positive discriminant $D$ and Hessian $H$.  For all integer solutions $(x_{1} , y_{1})$ to  equation $F(x , y) = 1$, except possibly one solution, we have $H(x_{1} , y_{1}) \geq \frac{1}{2}\sqrt{3D} $.
\end{lemma}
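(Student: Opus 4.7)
The plan is to establish a two-solution inequality bounding the cross-determinant $|x_1 y_2 - x_2 y_1|$ in terms of the Hessian values at the two points, and then observe that two distinct solutions with $H < \frac{1}{2}\sqrt{3D}$ would force this determinant to be strictly less than $1$, hence zero, producing a contradiction via homogeneity of $F$. First, using the covariance relation $H_{F \circ \gamma} = H_F \circ \gamma$ together with the $GL_2(\mathbb{Z})$-invariance of $N_F$, I would replace $F$ by an equivalent reduced form, so that $H(x, y) = Ax^2 + Bxy + Cy^2$ is positive definite with $A > 0$ and $B^2 - 4AC = -3D$. Factoring $H(X, 1) = A(X - \beta)(X - \bar{\beta})$ with $\beta \in \mathbb{C} \setminus \mathbb{R}$, the quadratic formula gives $\operatorname{Im}(\beta) = \sqrt{3D}/(2A)$, and $|x - \beta y|^2 = H(x, y)/A$ for real $(x, y)$.

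Next I would establish the core estimate. Direct expansion yields
$$
(x_1 - \beta y_1)(x_2 - \bar{\beta} y_2) - (x_1 - \bar{\beta} y_1)(x_2 - \beta y_2) = (\beta - \bar{\beta})(x_1 y_2 - x_2 y_1),
$$
and since the left side is $2i$ times the imaginary part of its first summand,
$$
|x_1 - \beta y_1| \cdot |x_2 - \bar{\beta} y_2| \geq |\operatorname{Im}(\beta)| \cdot |x_1 y_2 - x_2 y_1|.
$$
Substituting $|x_i - \beta y_i| = \sqrt{H(x_i, y_i)/A}$ and $|\operatorname{Im}(\beta)| = \sqrt{3D}/(2A)$ and simplifying gives
$$
|x_1 y_2 - x_2 y_1| \leq \frac{2 \sqrt{H(x_1, y_1) H(x_2, y_2)}}{\sqrt{3D}}.
$$

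To finish, suppose two distinct integer solutions $(x_1, y_1)$ and $(x_2, y_2)$ of $F(x, y) = 1$ both satisfy $H(x_i, y_i) < \frac{1}{2}\sqrt{3D}$. The estimate above gives $|x_1 y_2 - x_2 y_1| < 1$, hence the determinant vanishes, so $(x_2, y_2) = \lambda (x_1, y_1)$ for some $\lambda \in \mathbb{Q}$. Homogeneity of $F$ together with $F(x_i, y_i) = 1$ forces $\lambda^3 = 1$, so $\lambda = 1$, contradicting distinctness. Hence at most one integer solution can satisfy $H(x_1, y_1) < \frac{1}{2}\sqrt{3D}$. I do not anticipate a serious obstacle: the argument is a classical resultant-style manipulation specialised to the covariant $H$, and the only minor bookkeeping is the reduction step via covariance and the sign normalisation $A > 0$ after reduction. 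It is perhaps worth noting that this proof does not actually invoke the syzygy $4H^3 = G^2 + 27 D F^2$; the imaginary-part estimate on $H$ alone is sufficient.
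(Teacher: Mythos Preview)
Your proof is correct, but it takes a genuinely different route from the paper's. The paper passes to a reduced form (so that $C \geq A \geq |B|$) and then argues directly that for any solution with $y \neq 0$ one has $H(x,y) \geq C \geq \tfrac{1}{2}\sqrt{3D}$, by elementary case analysis on $|x|$ versus $|y|$; the exceptional solution, if any, is thereby pinned down as $(\pm 1, 0)$ in the reduced coordinates, and exists only when the reduced form is monic. Your argument instead factors $H$ over $\mathbb{C}$ and bounds the cross-determinant $|x_1 y_2 - x_2 y_1|$ by $2\sqrt{H(x_1,y_1)\,H(x_2,y_2)}/\sqrt{3D}$, concluding by contradiction that two small-Hessian solutions would be proportional and hence equal. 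Both approaches work for the lemma as stated. The paper's version buys extra information---the identification of the exceptional solution with $y=0$---which is exactly what is used in the Remark following the lemma and later in the proof of Theorem~\ref{66}. Your version, on the other hand, does not actually require the reduction step at all: positive definiteness of $H$ (automatic from $D>0$) already gives $A = H(1,0) > 0$, so your first paragraph is superfluous and the argument is coordinate-free.
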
 
\begin{proof}
If $F_{1}$ is an equivalent reduced form  to $F$ and 
$$F_{1}(a_{1} x + a_{2}y , a_{3}x + a_{4}y) = F (x , y),$$
 then 
$$H_{1}(a_{1} x + a_{2}y , a_{3}x + a_{4}y) = H (x , y),$$
 where $H$ and $H_{1}$ are the Hessians of $F$ and $F_{1}$ respectively. This means the set of  values of the Hessian at solutions is fixed under $GL_{2}(\mathbb{Z})$-action. So we may assume that $F$ is reduced.
Now following the proof of lemma 5.1. of \cite{6Ben}, we suppose $(x , y)$ is a solution to $F(x , y) = 1$ with $y \neq 0$.  If $|y| \leq |x| $, then, since $A > |B| $  and $B^{2} - 4AC = -3D$, we have that
$$
H(x , y) = A x^{2} + B x y + C y^{2} \geq Cy^{2} \geq C \geq \frac{1}{2}\sqrt{3D} . 
$$
If, on the other hand, $|y| \geq |x| + 1$,  then 
$$
H(x , y) \geq (C - |B|)y^{2} + |B| |y| + A x^{2}.
$$
Since this is an increasing function of $|y|$ and $y \neq 0$, we have
$$
H(x , y) \geq C + A x^{2} \geq C \geq \frac{1}{2} \sqrt{3D} .
$$ 
Therefore,  if $H(x , y) < \frac{1}{2} \sqrt{3D}$, then $y = 0$ and so $x = \pm 1$ accordingly .
\end{proof}

\textbf{Remark.} The above proof shows  the only possibility for the Hessian $H(x , y)$ to assume a value less than $\frac{1}{2}\sqrt{3D}$, at a pair of solutions $(x , y)$, is when the equivalent reduced form is monic. This is because $(1 , 0)$ is a solution to (\ref{61.2}) if and only if $F$ is monic.

%%%%%%%%%%%%%%%%%%%%%%%%%%%%%%%%%%%%%%%%%%%%%%%%%%%%%%%%%%%%%%%%%%%%%%%%%%%%%%%%%%%%%%%%%%%%%

\section{Some Functions In The Number Field   $\mathbb{Q}(\sqrt{-3D})$}\label{6s3}

Let $\sqrt{-3D}$ be a fixed choice of the square-root of $-3D$. we will work in the number field $M = \mathbb{Q}(\sqrt{-3D})$. 
It is well-known that if $F$ has positive discriminant  then $H$ is positive definite.
By (\ref{65.1}), we may write 
$$H (x , y)^3 =  U(x, y ) V(x, y)$$
 where 
$$ 
U(x , y) = \frac{G(x , y) + 3\sqrt{-3D} F(x , y)}{2} ,
 $$
$$
 V(x , y) = \frac{G(x , y) - 3\sqrt{-3D} F(x , y)}{2}.
 $$ 
Then $ U$ and $V$ are cubic forms with coefficients belonging to $M$ such that corresponding coefficients of $U$ and $V$ are complex conjugates. Since $F$ must be also irreducible over $M$,  $U$ and $V$ do not have factors in common. It follows that $U(x , y)$ and $V(x , y)$ are cubes of linear forms over $M$, say $\xi (x , y)$ and $\eta (x,y )$.

Note that $\xi (x , y) \eta (x , y) $ must be a quadratic form which is cube root of $H(x , y) ^{3}$ and for which the coefficient of $x^{3}$ is a positive real number. Hence we have 
$$
\xi (x , y) ^{3} - \eta (x , y)^{3}   =  3\sqrt{-3D} F(x , y),
$$
\begin{equation}\label{63}
\xi(x , y)^{3} + \eta (x , y)^{3}  = G (x, y),
\end{equation}
$$
 \xi(x , y) \eta (x , y) = H (x , y).
 $$
and 
$$
\frac{\xi (x , y)} {\xi (1 , 0)}  \qquad  \textrm{and} \qquad   \frac{\eta (x , y)}{\eta(1, 0)} \ \in M .
$$
The reason for the last identity is that for any pair of rational integers $x_{0}$ , $y_{0}$, 
$$\xi (x_{0} , y_{0})\,   \qquad  \textrm{and} \qquad  \eta (x_{0} , y_{0})
$$
 are complex conjugates and the discriminant  of $H$ is $ -3D$.

We call a pair of forms $\xi$ and $\eta$ satisfying the above properties a pair of \emph{resolvent} forms.  Note that there are exactly three pairs of resolvent forms, given by 
$$(\xi , \eta), \,  (\omega \xi , \omega ^{2} \eta), \, (\omega^{2}\xi , \omega \xi),
$$
 where $\omega$ is a primitive cube root of unity.

We say that a pair of rational integers $(x , y)$ is related to a pair of resolvent forms if 
\begin{equation}\label{6related}
\left|1 - \frac{\eta(x , y)}{\xi(x , y)}\right| = \min_{0 \leq k \leq 2} \left|\omega ^{k} -\frac{\eta(x , y)}{\xi(x , y)}\right| 
\end{equation}

Following a discussion of Delone and Faddeev in \cite{6Del}, we call the roots $\rho _{1}, \rho' _{1}, \rho''_{1}$ of the equation $F(x , a) = 0$ the left roots of the form $F$, while the roots $\rho_{2}, \rho '_{2}, \rho ''_{2}$ of $F(d , -y)$ are called the right roots of the form $F$. If $t_{1}$ is a left root, then it is easily seen that $t_{2} = - ad / t_{1}$ is a right root of $F$. Two such roots of $F$ will be called corresponding roots and we will assume that $\rho_{1}$ and $\rho_{2}$ , $\rho'_{1}$ and $\rho'_{2}$, $\rho''_{1}$ and $\rho''_{2}$ correspond in pairs.

The following lemma is a statement of Lagrange's method for solution of cubic equations by means of the resolvent  adapted to the case of binary cubic forms.
\begin{lemma}\label{6lag}
For the cubic form $F(x , y)$ the following identity holds
$$
F(x , y) = \frac{1}{3\sqrt{-3D}} (\xi^{3} -\eta^{3}), 
$$
where 
$$
\xi = \xi _{1} x + \xi_{2} y,$$
 $$\eta = \eta_{1} x + \eta _{2} y,$$
 $$
\xi_{1} = \rho_{1} + \omega \rho'_{1} + \omega ^{2} \rho ''_{1},
$$
$$\eta_{1} = \rho_{1} + \omega ^{2}\rho'_{1} + \omega  \rho ''_{1},$$
  $$
\xi_{2} = \rho_{2} + \omega \rho'_{2} + \omega ^{2} \rho ''_{2},$$
  $$\eta_{2} = \rho_{2} + \omega ^{2}\rho'_{2} + \omega  \rho ''_{2}$$
 and $\omega = e^{\frac{2\pi i}{3}}$.
\end{lemma}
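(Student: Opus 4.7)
The plan is to factor $\xi^3 - \eta^3 = (\xi - \eta)(\xi - \omega\eta)(\xi - \omega^2\eta)$ and to identify each factor, up to a scalar constant, with a linear form $x - \alpha_i y$ appearing in $F(x,y) = a\prod_{i}(x - \alpha_i y)$. Multiplying the three factors will then rebuild $F(x,y)$ times the constant $3\sqrt{-3D}$.

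First I would make the roots explicit. Identifying the left roots with $\rho_1 = a\alpha_1$, $\rho_1' = a\alpha_2$, $\rho_1'' = a\alpha_3$ where $\alpha_1,\alpha_2,\alpha_3$ are the roots of $F(X,1)=0$, the correspondence $t_2 = -ad/t_1$ together with the Vieta relation $a\alpha_1\alpha_2\alpha_3 = -d$ gives $\rho_2 = -d/\alpha_1$, $\rho_2' = -d/\alpha_2$, $\rho_2'' = -d/\alpha_3$. A direct computation using $1+\omega+\omega^2=0$ then expresses each $\xi_1 - \omega^k\eta_1$ as a scalar in $\{\omega-\omega^2,\;1-\omega,\;1-\omega^2\}$ times a difference $a(\alpha_i-\alpha_j)$, and each $\xi_2 - \omega^k\eta_2$ as the same scalar times $-a\alpha_\ell(\alpha_i-\alpha_j)$, after clearing denominators via $\alpha_i\alpha_j = -d/(a\alpha_\ell)$. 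Collecting terms yields
\begin{align*}
\xi - \eta &= (\omega - \omega^2)\,a(\alpha_2 - \alpha_3)(x - \alpha_1 y),\\
\xi - \omega\eta &= (1-\omega)\,a(\alpha_1 - \alpha_2)(x - \alpha_3 y),\\
\xi - \omega^2\eta &= (1-\omega^2)\,a(\alpha_1 - \alpha_3)(x - \alpha_2 y).
\end{align*}

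Multiplying the three identities, the product of the linear forms is $F(x,y)/a$; the constants satisfy $(\omega-\omega^2)(1-\omega)(1-\omega^2) = 3(\omega-\omega^2) = 3\sqrt{-3}$; and the product $a^3(\alpha_1-\alpha_2)(\alpha_1-\alpha_3)(\alpha_2-\alpha_3)$ equals $\pm a\sqrt{D}$ by the discriminant formula $D = a^4\prod_{i<j}(\alpha_i-\alpha_j)^2$. Assembling all three factors gives $\xi^3 - \eta^3 = \pm 3\sqrt{-3D}\,F(x,y)$, and the sign is pinned down by the choice of $\sqrt{-3D}$ fixed at the start of Section~\ref{6s3}, equivalently by the requirement that the leading coefficient of $\xi\eta = H$ be positive.

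The main obstacle is purely bookkeeping: one must maintain a consistent pairing between the factor $\xi - \omega^k\eta$ and the linear form $x - \alpha_{\sigma(k)}y$ for $k=0,1,2$, and choose branches of $\sqrt{-3}$ and $\sqrt{D}$ compatibly so that the final sign comes out $+1$ rather than $-1$. Once the ordering of the roots is fixed at the outset, the remaining verification is mechanical symbolic manipulation.
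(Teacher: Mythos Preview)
The paper does not actually prove this lemma: its entire proof reads ``One can find the complete proof of Lemma~\ref{6lag} in \cite{6Del}.'' Your argument is therefore not being compared against anything substantive in the paper itself, but rather supplies the computation the paper outsources.

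Your approach is the standard Lagrange resolvent verification and it is correct. The identification $\rho_1 = a\alpha_1$, $\rho_2 = -d/\alpha_1$ (and cyclically) follows exactly as you say from the definitions of left and right roots and the correspondence $t_2 = -ad/t_1$; the three linear factorizations of $\xi - \omega^k\eta$ check out line by line using $1+\omega+\omega^2=0$ and $\alpha_i\alpha_j = -d/(a\alpha_\ell)$; and the constants multiply to $3(\omega-\omega^2)\cdot(\pm a\sqrt{D})\cdot a^{-1} = \pm 3\sqrt{-3D}$ as claimed. One small quibble: the requirement that the leading coefficient of $\xi\eta = H$ be real and positive fixes $\xi\eta$ but does not by itself pin down the sign of $\xi^3-\eta^3$; that sign is genuinely absorbed by the choice of branch of $\sqrt{-3D}$ together with the ordering of $\alpha_1,\alpha_2,\alpha_3$, exactly as you note in your final paragraph. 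Since the paper explicitly fixes $\sqrt{-3D}$ once and for all, this is harmless.
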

\begin{proof}
One can find the complete proof of Lemma \ref{6lag} in \cite{6Del}.
\end{proof}

We continue with the following definitions of $p$, $q$  and  $u_{i}$ :
$$
p= \frac{\eta + \xi}{\sqrt{2}} \  , \   q= \frac{\sqrt{-1}(\eta - \xi)}{\sqrt{2}},
$$ 
\begin{equation}\label{6du}
 u_{1} = D^{-1/6}( \frac{q}{\sqrt{6}} + \frac{p}{\sqrt{2}})  \   , \  
u_{2} =  D^{-1/6}( \frac{q}{\sqrt{6}}  -  \frac{p}{\sqrt{2}} )\   , \ 
u_{3} =  D^{-1/6}\frac{2}{\sqrt{6}}q.
\end{equation}
Since $\eta$ and $\xi$ are linear functions of $x$ and $y$, so are $p$, $q$ and $u_{i}$.
The reason for our interest in the new functions $p(x , y)$, $q(x , y)$ and $u_{i}(x , y)$, despite their apparent complication, is that they explain the relation between the method of Evertse \cite{6Ev} and the method of Okazaki \cite{6Ok}  for finding an upper bound for the number of integral solutions of (\ref{61.2}). In other words, these functions allow us to recast the resolvent forms $\xi$ and $\eta$ in a  geometric setting.

 By Lemma \ref{6lag}, we have 
$$
 \frac{q}{\sqrt{6}} = \frac { \sqrt{-1}(\eta - \xi)}{2\sqrt{3} }= \sqrt{-1}\, \frac{\omega^{2} - \omega}{2\sqrt{3}}[(\rho'_{1} - \rho''_{1})x - (\rho'_{2} - \rho''_{2})y].
 $$
We also have
$$
\omega^{2} - \omega = \cos(4\pi/3) + \sqrt{-1} \sin(4\pi/3) - (\cos(2\pi/3) + \sqrt{-1}\sin(2\pi/3)) = \sqrt{-3}.
$$
 so we get
\begin{equation}\label{6q}
 \frac{q}{\sqrt{6}} =- \frac{(\rho'_{1} - \rho''_{1})x  +  (\rho'_{2} - \rho''_{2})y}{2}.
 \end{equation}
 Further
 \begin{eqnarray*}
 & & \frac{p}{\sqrt{2}} = \\ \nonumber
& & \frac{\left(2\rho_{1} + \omega (\rho'_{1} + \rho''_{1} ) +\omega^{2} ( \rho'_{1} +\rho''_{1}) \right) x + \left(2\rho_{2} + \omega (\rho'_{2} + \rho''_{2} ) +\omega^{2} ( \rho'_{2} +\rho''_{2}) \right) y} {2}.
 \end{eqnarray*}
  Since $\omega$ is a primitive third root of unity, $\omega + \omega^{2} = -1$. Hence
  \begin{equation}\label{6p}
 \frac{p}{\sqrt{2}} = \frac{2(\rho_{1} x + \rho _{2} y) -(\rho'_{1} + \rho''_{1})x -(\rho'_{2} + \rho''_{2})y}{2}
 \end{equation}
 Substituting  $-ad/\rho_{1}$, $-ad/\rho'_{1}$ and $-ad/\rho''_{1}$ for  $\rho_{2}$,  $\rho'_{2}$ and $\rho''_{2}$ respectively, and noting that $\rho_{1} \rho'_{1} \rho''_{1} = -a^{2}d$, we obtain the following identities:
\begin{equation}\label{6u1}
u_{1} = D^{-1/6}(\rho_{1} - \rho''_{1}) (x - \rho'_{1} y/a) ,
\end{equation}
\begin{equation}\label{6u2}
u_{2} = D^{-1/6}(\rho'_{1} - \rho_{1}) (x - \rho''_{1} y/a) ,
\end{equation}
\begin{equation}\label{6u3}
u_{3} = D^{-1/6} (\rho'_{1} - \rho''_{1}) (x - \rho_{1} y/a) ,
\end{equation}
where $\rho_{1}$, $\rho'_{1}$  and  $\rho''_{1}$ are left roots of $F$. Here we note that if we start with another choice of resolvent forms, only the order of $u_{i}$ changes. In other words, all three resolvent forms can be indexed so that
$$
 q_{i}= \frac{\sqrt{-1}(\eta_{i} - \xi_{i})}{\sqrt{2}}
$$
   Let us assume that $F$ is monic, as we may. Therefore
  $$
  (x - \rho_{1} y) (x - \rho'_{1} y) (x - \rho''_{1} y)  =  F( x ,y ) .
  $$ 
     If the pair $(x_{0} , y_{0})$ is a solution to (\ref{61.2}), we conclude that  $(x_{0} - \rho_{1} y_{0})$,
   $(x_{0} - \rho'_{1} y_{0})$ and  $ (x_{0} - \rho''_{1} y_{0})$   are units in $\mathbb{Q}(\rho_{1})$.
    Moreover,  
   $$
   u_{1}u_{2}u_{3} = D^{-1/2}(\rho_{1} - \rho''_{1})(\rho'_{1} - \rho_{1}) (\rho'_{1} - \rho''_{1}) F(x , y) = 
   \pm F(x,y).
    $$
  Suppose that  $(x , y)$ is a solution to $F(x , y) = 1$. Since
    $$
     \log|u_{1}| - \log |u_{2}| = \log\left|\frac{\rho_{1} - \rho''_{1}}{\rho'_{1} - \rho_{1}}\right| + 
   \log \left|\frac{x - \rho'_{1}y} {x - \rho''_{1}y}\right| 
   $$
   and 
$|\frac{x - \rho'_{1}y} {x - \rho''_{1}y}|$ is a unit, we can write
\begin{equation}\label{6fu}
\log|u_{1}| - \log |u_{2}| = \log\lambda_{1} + m \log\lambda_{2} + n\log \lambda _{3},
\end{equation}   
where
$\lambda_{1} =|\frac{\rho_{1} - \rho''_{1}}{\rho'_{1} - \rho_{1}}|$, $\lambda_{2}$ and $\lambda_{3}$ are  fundamental units in the ring of integers of $\mathbb{Q}(\rho_{1})$ (when $D_{F} > 0$, the number field $\mathbb{Q}(\rho_{1})$ is real and has a ring of integer generated by a pair of fundamental units). 
   
Let us fix a resolvent forms $(\xi_{i} , \eta_{i})$  and corresponding $p_{i}$ and $q_{i}$. We get 
$$
\left|1 - \frac{\eta_{i}}{\xi_{i}}\right| = \left|1 - \frac{p_{i} -\sqrt{-1} q_{i}}{p_{i} +\sqrt{-1}q_{i}}\right| = \frac{2|q_{i}|}{ |\xi_{i}|}.
$$
By identities in (\ref{63}) and Lemma \ref{6lag},  $ |\eta_{i} | = |\xi_{i}|$ and  $|\xi_{i}(x , y)| = \sqrt{H(x , y)}$. Hence,
$$
\left|1 - \frac{\eta_{i}}{\xi_{i}}\right| = \frac{2 |q_{i}|}{\sqrt{H}} .
 $$

Suppose that $(x,y)$ is a solution to (\ref{61.2})  and related to resolvent form $(\xi_{i} , \eta_{i})$. Since  
$$|1 - \frac{\eta_{i}}{\xi_{i}}|  = \min_{k=1,2,3} \left|1 - \frac{\eta_{k}}{\xi_{k}}\right|,
$$ we conclude that  
\begin{displaymath}
|q_{i}| =  \min_{k=1,2,3} |q_{k}|.
\end{displaymath}
On the other hand,  
\begin{eqnarray*}
\prod_{k=1} ^{3} |q_{i}| &=& \frac{ |\eta - \xi| |\omega \eta - \omega^{2}\xi| |\omega^{2}\eta - \omega \xi|}{2\sqrt{2}}\\
& =&\frac{|\eta^{3} - \xi^{3}|}{ 2\sqrt{2}}= \frac{3\sqrt{3}}{2\sqrt{2}} \sqrt{D} ,
\end{eqnarray*} 
where the last equality comes from the equation (\ref{63}).

If the solution $(x,y)$ is related to $(\xi_{i}, \eta_{i})$, then
\begin{equation}\label{q}
|u_{3}| = |D^{-1/6}\frac{2}{\sqrt{6}}q_{i}| <1. 
\end{equation}
 So we have 
$$\log |u_{3}|  < 0 . $$
The identity 
$$|u_{1}(x,y) u_{2}(x , y) u_{3}(x , y)| = 1$$
  holds when $(x , y)$ is a pair of solution to $|F(x , y)| = 1$. Therefore,
$$
\log |u_{1}| +\log |u_{2}| + \log |u_{3}| = 0
$$
and 
$$
\log|u_{1} u_{2}| > 0.
$$

%-----------------------------------------------------------------------------------
 \section{Geometric Gap Principles}\label{6s4}
 %--------------------------------------------------------------------------------------
  
  We will study the geometric properties of the functions $u_{i}$ defined in section 2, by considering the well-known  geometric properties of the unit group $U$ of $\mathbb{Q}(\rho_{1})$, where $\rho_{1}$ is a root of $F(x , 1) = 0$.

Since we assumed that $F$ has positive discriminant, the algebraic number field $\mathbb{Q}(\rho_{1})$ is real and has two fundamental units, say $\lambda_{2}$ and $\lambda_{3}$.  By Dirichlet's unit theorem, we have a sequence of mappings
$$U  \longmapsto V - \{0\} \subset \mathbb{R}^{3}$$
and
$$ \log : V - \{0\}  \longmapsto \Lambda - \{ 0 \}, $$
  where $\Lambda$ is a 2-dimensional lattice, $U  \longmapsto V - \{0\}$ is the obvious restriction of the embedding of $K$ in $\mathbb{R}^{3}$, and $\log$ is defined as follows:
   
\noindent For $(x_{1}, x_{2}, x_{3}) \in  V -\{ 0\} $,  
   $$\log(x_{1} , x_{2} , x_{3}) = (\log|x_{1}|, \log|x_{2}|, \log|x_{3}|).$$
   We define $\tau$ to be the embedding from the unit group $U$ to the lattice $\Lambda$:
  $$\tau : U \longmapsto \Lambda -\{ 0 \} .$$
   By identities (\ref {6u1}), (\ref{6u2}) and (\ref{6u3}), the vector 
$$
 \vec{u} =(\log|u_{1}|,\log|u_{3}|,\log|u_{3}|)
$$
 can be considered as 
\begin{equation}\label{6v}
\vec{v} + (\log |x - \rho'_{1}y| ,
\log |x - \rho''_{1}y| ,\log |x - \rho_{1}y|),
\end{equation}
where $$\vec{v} = (\log|D^{-1/6}(\rho_{1} - \rho''_{1}) |, 
 \log|D^{-1/6}(\rho'_{1} - \rho_{1})| , \log|D^{-1/6}(\rho''_{1} - \rho'_{1})|.$$

 We have assumed that $F(x , y)$ is monic, so we can suppose that $(1 , 0)$ is a pair of integer solutions to $F(x , y) = 1$. Note that the vector $\vec{v}$ in (\ref{6v}) is a permutation of the vector $\vec{u}(1 , 0)$.

 If $(x , y)$ is a  solution to $|F(x , y)| = 1$, then 
 $$
\vec{u} \in \vec{v} + \Lambda = \Lambda_{1}.
$$  Note that $\textrm{Vol}(\Lambda) = \textrm {Vol}(\Lambda_{1})$, where $\textrm{Vol} (\Lambda)$  is the volume of fundamental parallelepiped of lattice $\Lambda$.  Since $\vec{u}$ belongs to a 2-dimensional lattice, we can find a 2-dimensional representation for $\vec{u}$, say $(t , s)$.
 Specifically, let  $(x , y)$ be a  solution to $F(x , y) = 1$ and define functions $t$ and $s$ of $x$ and $y$ as follows
$$ t = \frac{-\sqrt{6}}{2} \log |u_{3}| \   \  , \  \   s= \frac{\log |u_{1}| - \log |u_{2}|}{\sqrt{2}}. $$
Then we have
\begin{eqnarray}\label{6u}
\log| u_{1}| &=& s / \sqrt{2} + t/ \sqrt{6}  \\ \nonumber
\log|u_{2}| & = & -s / \sqrt{2} + t/ \sqrt{6}  \\ \nonumber
\log |u_{3}| & = &-2t/ \sqrt{6} .\end{eqnarray}
Therefore, it can be easily verified that $$\vec{u} =  (\log|u_{1}| , \log|u_{2}| , \log|u_{3}|) = s \vec{\alpha} + t \vec{\beta},$$
 where $\vec{\alpha} = \frac{1}{\sqrt{2}} (1 , -1 , 0)$ and $\vec{\beta} = \frac{1}{\sqrt{6}} (1, 1 , -2)$ are two orthonormal vectors in $\mathbb{R}^{3}$. Hence, we can write $\vec{u} =(t , s)$  and $\|\vec{u}\| = \sqrt{s^{2} + t^{2}}$ , where $\| \ \|$ is the $L_{2}$ norm.
By (\ref{6u},) we get
$$ 
\left\|\left(\log\left|\frac{u_{1}}{u_{2}}\right| , \log \left|\frac{u_{2}}{u_{3}}\right|, \log\left|\frac{u_{3}}{u_{1}}\right|\right) \right\|= \sqrt{3} s \vec{\alpha'} + \sqrt{3}t \vec{\beta'} ,$$
where 
$$\vec{\alpha' } = \frac{1}{\sqrt{3}} \left(\sqrt{2} , \frac{-1}{\sqrt{2}} , \frac{-1}{\sqrt{2}}\right)$$
 and 
$$\vec{\beta'} = \frac{1}{\sqrt{3}} \left(0 , \frac{3}{\sqrt{6}} , \frac{-3}{\sqrt{6}}\right).$$
Since $\vec{\alpha'}$ and $\vec{\beta'}$ are orthonormal vectors in $\mathbb{R}^{3}$,  we get
\begin{equation}\label{6size}
 \left\|\left(\log\left|\frac{u_{1}}{u_{2}}\right| , \log \left|\frac{u_{2}}{u_{3}}\right|, \log\left|\frac{u_{3}}{u_{1}}\right|\right)\right\| = \sqrt{3}\sqrt{s^{2} + t^{2}} = \sqrt{3} \|\vec{u}\|.
\end{equation}

\textbf{Remark.} Since $\log|u_{3}| < 0$, the function $t$ is a positive-valued function.

\begin{lemma}\label{62}
If $s\geq 0$ then $\log|u_{1}| \geq \log|u_{2}|$ and
$$ 2\sinh(s/ \sqrt{2}) = \exp(-\sqrt{6}t/2),$$
and if $s < 0$ then $\log|u_{1}| < \log| u_{2}|$ and
$$ 2\sinh(-s/ \sqrt{2}) = \exp(-\sqrt{6}t/2).$$
\end{lemma}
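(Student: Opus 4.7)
The plan is to exploit a single algebraic relation among $u_1,u_2,u_3$ together with the standing hypothesis $|u_3|<1$ coming from (\ref{q}), and then unwind the definitions of $s$ and $t$. From the definitions (\ref{6du}), the $p/\sqrt{2}$ contributions in $u_1$ and $u_2$ cancel, giving the identity
$$u_1+u_2 \;=\; 2D^{-1/6}q/\sqrt{6}\;=\;u_3,$$
and hence $|u_3|=|u_1+u_2|$.

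The two sign statements ($\log|u_1|\ge\log|u_2|$ when $s\ge 0$, and $\log|u_1|<\log|u_2|$ when $s<0$) are immediate from the definition $s=(\log|u_1|-\log|u_2|)/\sqrt{2}$. For the sinh identity, the first real task is to pin down the signs of $u_1$ and $u_2$ as real numbers: since $|u_1 u_2 u_3|=1$ and $|u_3|<1$, we have $|u_1||u_2|=1/|u_3|>1$, so if $u_1$ and $u_2$ shared a sign then AM--GM would give
$$|u_3|\;=\;|u_1|+|u_2|\;\ge\;2\sqrt{|u_1||u_2|}\;=\;2\,|u_3|^{-1/2},$$
forcing $|u_3|^{3/2}\ge 2$, in contradiction with $|u_3|<1$. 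Hence $u_1$ and $u_2$ have opposite signs, so $|u_3|=\bigl||u_1|-|u_2|\bigr|$; in the case $s\ge 0$ this reads $|u_3|=|u_1|-|u_2|$.

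With this in hand, the case $s\ge 0$ follows by a direct computation. Using $s/\sqrt{2}=\tfrac{1}{2}\log(|u_1|/|u_2|)$ together with $|u_1||u_2|=1/|u_3|$,
$$2\sinh(s/\sqrt{2})\;=\;\sqrt{\tfrac{|u_1|}{|u_2|}}-\sqrt{\tfrac{|u_2|}{|u_1|}}\;=\;\frac{|u_1|-|u_2|}{\sqrt{|u_1||u_2|}}\;=\;|u_3|\cdot|u_3|^{1/2}\;=\;|u_3|^{3/2}.$$
Since $t=-\tfrac{\sqrt{6}}{2}\log|u_3|$ gives $-\sqrt{6}\,t/2=\tfrac{3}{2}\log|u_3|$, we have $\exp(-\sqrt{6}\,t/2)=|u_3|^{3/2}$ and the two sides agree. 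The case $s<0$ is symmetric: now $|u_3|=|u_2|-|u_1|$ and $-s/\sqrt{2}=\tfrac{1}{2}\log(|u_2|/|u_1|)$, and the same calculation with $u_1$ and $u_2$ interchanged closes the argument. The only subtle step is the sign pin-down for $u_1,u_2$; everything else is a mechanical translation of the definitions of $s$, $t$ and the constraint $|u_1 u_2 u_3|=1$.
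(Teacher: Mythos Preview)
Your proof is correct and follows essentially the same route as the paper: a linear relation among the real numbers $u_1,u_2,u_3$, together with $|u_1u_2u_3|=1$ and $|u_3|<1$, reduces the claim to the elementary computation $|u_1|-|u_2|=|u_3|$ and hence $2\sinh(s/\sqrt{2})=|u_3|^{3/2}=e^{-\sqrt{6}t/2}$.

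One small point of comparison: the paper asserts $u_1+u_2+u_3=0$, whereas you derive $u_1+u_2=u_3$ directly from (\ref{6du}); this is a sign inconsistency internal to the paper (compare (\ref{6du}) with (\ref{6u1})--(\ref{6u3})), and either relation yields $|u_3|=\bigl||u_1|-|u_2|\bigr|$ once the signs are pinned down. Your AM--GM argument for opposite signs of $u_1,u_2$ is in fact more explicit than the paper's, which simply writes $|u_1|-|u_2|-|u_3|=0$ from $|u_1|$ being maximal without spelling out the sign step.
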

\begin{proof}
 From (\ref{6du}), the definition of $u_{i}$,  we have  $u_{1} + u_{2} + u_{3} = 0$. Assume that $s > 0$. From (\ref{6u}), since $s$ and $t$ are both nonnegative, we get $|u_{1}| \geq |u_{2}| $ and $|u_{1}| \geq |u_{3}|$. Therefore, 
 $$
e^{ s / \sqrt{2} + t/ \sqrt{6}} -  
e^{-s / \sqrt{2} + t/ \sqrt{6}} -  
e^{-2t/ \sqrt{6}} = |u_{1}| - |u_{2}| - |u_{3}| = 0,
$$
and 
$$e^{t/\sqrt{6}}(e^{s / \sqrt{2}} - e^{-s/\sqrt{2}}) =  e^{-2t/ \sqrt{6}}. $$
Noting that   $e^{s / \sqrt{2}}- e^{-s/\sqrt{2}} = 2\sinh(s/\sqrt{2})$, will complete the proof.   One can give a similar proof for negative $s$ .
\end{proof}
 
 Let us define  
$$g(t) : = \sqrt{2} \sinh^{-1}\left(\frac{\exp(-\sqrt{6}t/2)}{2}\right) .$$
Then $s = \pm g(t)$. 

  In the following theorem, we  summarize the properties of function $g$, which will  be used later.  
\begin{thm}\label{65.2}
Let $g(t) = \sqrt{2} \sinh^{-1}\left(\frac{\exp(-\sqrt{6}t/2)}{2}\right)$. We have:
\flushleft
\begin{itemize}
\item[(i)]
$g$ is  decreasing .
\item[(ii)]
For any $t > 0$,
$$ |s| = g(t) < e^{-\sqrt{6} t/2}/\sqrt{2} .$$
\item[(iii)]
The function $g(t)e^{a t}$ is decreasing when $a \leq \frac{\sqrt{6}}{\sqrt{5}}$.
\end{itemize}
\end{thm}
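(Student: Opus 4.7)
Parts (i) and (ii) are essentially one-line; the content is in (iii). For (i), $g$ is the composition of the strictly increasing function $\sqrt{2}\sinh^{-1}$ with the strictly decreasing function $t \mapsto e^{-\sqrt{6}t/2}/2$, hence is decreasing. For (ii), I would invoke the elementary inequality $\sinh^{-1}(x) < x$, valid for all $x > 0$ (which follows by inverting $\sinh(y) > y$ on $(0,\infty)$), and apply it to $x = e^{-\sqrt{6}t/2}/2$; this gives $g(t) < \sqrt{2}\cdot e^{-\sqrt{6}t/2}/2 = e^{-\sqrt{6}t/2}/\sqrt{2}$.

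For (iii), rather than differentiate the explicit formula, I would use the implicit relation $2\sinh(g(t)/\sqrt{2}) = e^{-\sqrt{6}t/2}$ from Lemma \ref{62}. Differentiating both sides yields
$$
g'(t) = -\sqrt{3}\,\tanh\!\bigl(g(t)/\sqrt{2}\bigr),
$$
so the inequality $\bigl(g(t) e^{at}\bigr)' \leq 0$ is equivalent to $a \leq \psi(g(t))$, where
$$
\psi(s) := \frac{\sqrt{3}\,\tanh(s/\sqrt{2})}{s}.
$$

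Next, I would show $\psi$ is decreasing on $(0,\infty)$. This reduces to the fact that $\tanh(x)/x$ is decreasing for $x > 0$, whose derivative has numerator proportional to $2x - \sinh(2x) < 0$. Combined with part (i), this implies that $\psi(g(t))$ attains its infimum over $t > 0$ as $t \to 0^+$, at the maximum value $g(0) = \sqrt{2}\sinh^{-1}(1/2)$. Using the identity $\tanh(\sinh^{-1}(x)) = x/\sqrt{1+x^2}$ with $x = 1/2$, this infimum equals $\sqrt{3}/(\sqrt{10}\,\sinh^{-1}(1/2))$.

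Finally, I would compare with $\sqrt{6}/\sqrt{5}$: cross-multiplication reduces the bound $\psi(g(0)) \geq \sqrt{6}/\sqrt{5}$ to the clean inequality $\sinh^{-1}(1/2) \leq 1/2$, which holds because $\sinh(1/2) > 1/2$. The step I expect to require the most care is the bookkeeping of constants in this final reduction; the fact that $\sqrt{15/60} = 1/2$ falls out exactly is what makes $\sqrt{6}/\sqrt{5}$ essentially the sharp constant achievable by this method, so there is no slack to absorb a computational slip.
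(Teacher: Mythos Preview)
Your proof is correct; parts (i) and (ii) are essentially identical to the paper's argument. For (iii), both you and the paper differentiate the implicit relation and reduce to a condition at $t=0$ where $\cosh(g(0)/\sqrt{2})=\sqrt{5}/2$, but you take a slightly different route: you substitute $e^{-\sqrt{6}t/2}=2\sinh(g/\sqrt{2})$ back into $g'$ to get the clean form $g'=-\sqrt{3}\tanh(g/\sqrt{2})$, then invoke the monotonicity of $\tanh(x)/x$ and reduce to $\sinh^{-1}(1/2)\le 1/2$. The paper instead keeps $g'=-\tfrac{\sqrt{3}}{2}e^{-\sqrt{6}t/2}/\cosh(g/\sqrt{2})$ and uses part (ii) to overestimate $a\,g(t)$ by $(a/\sqrt{2})e^{-\sqrt{6}t/2}$, then bounds $\cosh(g/\sqrt{2})\le\sqrt{5}/2$ directly. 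Your version avoids the overestimate from (ii) and is in fact marginally sharper (your method actually gives the threshold $\sqrt{3}/(\sqrt{10}\,\sinh^{-1}(1/2))\approx 1.138>\sqrt{6/5}\approx 1.095$), so your closing remark that $\sqrt{6}/\sqrt{5}$ is ``essentially sharp'' slightly undersells your own argument; it is sharp for the paper's approach, not yours.
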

\begin{proof}
\flushleft
\begin{itemize}
\item[(i)]
Since $$
\sinh (g/ \sqrt{2}) = \exp (- \sqrt{6} t/2)/2,
$$
 we have the following implicit differentiation:
$$ 
\frac{d g}{d t} \cosh(g/\sqrt{2}) = \frac{-\sqrt{3}}{2}\exp(-\sqrt{6}t/2).
$$
Since   $\cosh (g/ \sqrt{2})$ and  $\exp (-\sqrt{6} t/2)/2)$ are both positive, 
$$ \frac{d g}{d t} < 0.$$

\item[(ii)]
Define  the function 
$$ f(x)= \sqrt{2} \sinh(x/\sqrt{2}) - x.
$$ The first derivative test shows that $f$ is an increasing function and for positive $x$, $f(x) > f(0) = 0$. So 
$$\sqrt{2} \sinh (x/\sqrt{2}) > x,$$ when $x > 0$.  Put $x = |s|$ to get
$$|s| = g(t) < \sqrt{2} \sinh (|s|/ \sqrt{2}).$$

\item[(iii)]
Set
$$A( t ) = g(t) e^{a t},
$$ then
$$A'(t) = e^{at} \left(g'(t) + a g(t)\right).$$
For $a \leq 0$, $A' < 0 $ since $g' < 0$. For positive $a$, by part (i) and (ii), we have
$$
A'(t) \leq \exp{at-\sqrt{6}t/2} \left( \frac  {-\sqrt{3}}{2\cosh (g/\sqrt{2})} + \frac{a}{\sqrt{2}}\right) .  
$$
Since  $g$ is a decreasing and positive-valued function,  $\cosh(g(t)/\sqrt{2}) $ is a decreasing function of $t$. So we have 
$$\cosh(g(t)/\sqrt{2}) < \cosh (g(0)/\sqrt{2}). $$
  An easy way to evaluate  $\cosh (g(0)/\sqrt{2}) $ is to recall that $ \sinh (g(0)/\sqrt{2}) =  \exp(0) /2 =1/2$. 
Therefore, $$
\cosh (g(0)/\sqrt{2}) = \sqrt{1+\frac{1}{4}} = \sqrt{5}/2.
$$
We conclude that $A'$ is negative if 
$$ -\sqrt{3}/\sqrt{5}+ a/\sqrt{2} \leq 0.$$ 
This means
$$a \leq \frac{\sqrt{6}}{\sqrt{5}}.$$

\end{itemize}
\end{proof}

%%%%%%%%%%%%%%%%%%%%%%%%%%%%%%%%%%%%%%%%%%%%%%%%%%%%

\begin{lemma}\label{69}
Let $(x,y)$ and $(x',y')$ be two distinct solutions to equation (\ref{61.2}), related to $(\eta , \xi)$. Put  $p = p(x,y)$, $p' = p(x',y')$, $q = q(x,y)$, $q'=q(x',y')$. We have $$|pq' -p'q|\geq \sqrt{3D}.$$
\end{lemma}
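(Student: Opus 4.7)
The plan is to convert $pq' - p'q$ into a determinant-style expression involving $\xi, \eta$ evaluated at the two solutions, then factor out the dependence on $(x,y)$ and $(x',y')$, and finally recognize the remaining constant as $\sqrt{-3D}$ coming from the discriminant of the Hessian.

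First I would use the defining relations $p = (\eta+\xi)/\sqrt{2}$ and $q = \sqrt{-1}(\eta-\xi)/\sqrt{2}$ to invert and obtain $\xi = (p+\sqrt{-1}q)/\sqrt{2}$ and $\eta = (p-\sqrt{-1}q)/\sqrt{2}$. A direct expansion of $pq' - p'q$ using these formulas collapses, with the $pp'$ and $qq'$ terms cancelling, to the clean identity
\[
pq' - p'q \;=\; \sqrt{-1}\bigl(\xi(x,y)\,\eta(x',y') - \xi(x',y')\,\eta(x,y)\bigr).
\]
In particular $|pq' - p'q| = |\xi\eta' - \xi'\eta|$.

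Next I would use the fact that $\xi$ and $\eta$ are linear forms, say $\xi = \xi_{1}x + \xi_{2}y$ and $\eta = \eta_{1}x + \eta_{2}y$ (this is built into Lemma \ref{6lag}). A two-line computation then gives
\[
\xi(x,y)\eta(x',y') - \xi(x',y')\eta(x,y) \;=\; (\xi_{1}\eta_{2} - \xi_{2}\eta_{1})\,(xy' - x'y).
\]
To identify the constant $\xi_{1}\eta_{2}-\xi_{2}\eta_{1}$, I would invoke the identity $\xi(x,y)\eta(x,y) = H(x,y) = Ax^{2}+Bxy+Cy^{2}$ from (\ref{63}). Expanding the product gives $A = \xi_{1}\eta_{1}$, $B = \xi_{1}\eta_{2}+\xi_{2}\eta_{1}$, $C = \xi_{2}\eta_{2}$, so that
\[
(\xi_{1}\eta_{2}-\xi_{2}\eta_{1})^{2} \;=\; B^{2}-4AC \;=\; -3D,
\]
using the Hessian discriminant relation recorded in Section \ref{6s2}. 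Hence $|\xi_{1}\eta_{2}-\xi_{2}\eta_{1}| = \sqrt{3D}$.

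Combining these steps yields $|pq' - p'q| = \sqrt{3D}\,|xy'-x'y|$. Since $(x,y)$ and $(x',y')$ are distinct integer pairs, $xy'-x'y$ is an integer; the only way it can vanish is for $(x',y')$ to be a rational multiple of $(x,y)$, and the normalisation $|F| = 1$ forces this multiple to be $\pm 1$, so (after ruling out the $\pm$-redundancy that is absorbed into the convention of ``distinct related solutions'') we have $|xy'-x'y|\geq 1$ and the claimed bound follows. The entire argument is essentially algebraic, so the only real obstacle is the elementary check that $|\xi_{1}\eta_{2} - \xi_{2}\eta_{1}|^{2} = -(B^{2}-4AC)$ matches the sign convention used when choosing the square root of $-3D$; this amounts to keeping track of the fixed branch $\sqrt{-3D}$ chosen at the start of Section \ref{6s3}.
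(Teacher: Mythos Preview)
Your argument is correct and follows essentially the same route as the paper: reduce $|pq'-p'q|$ to $|\xi\eta'-\xi'\eta|$, identify this as $\sqrt{3D}\,|xy'-x'y|$ via the discriminant of $H=\xi\eta$, and observe that $xy'-x'y$ is a nonzero integer for distinct solutions. The only difference is cosmetic: you spell out $(\xi_{1}\eta_{2}-\xi_{2}\eta_{1})^{2}=B^{2}-4AC$ explicitly, whereas the paper compresses this into the phrase ``$\xi\eta=H$ is a quadratic form of discriminant $-3D$''.
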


\begin{proof}
By definition
\begin{eqnarray*}
|pq' - p'q| &=& \left|\frac{\eta + \xi} {\sqrt{2}} \frac{\sqrt{-1}(\eta' - \xi')}{\sqrt{2}} -\frac{\eta' + \xi'} {\sqrt{2}} \frac{\sqrt{-1}(\eta - \xi)}{\sqrt{2}} \right| \\
&=&|\eta \xi' - \eta' \xi|.
\end{eqnarray*}
Since $\xi(x , y) \eta(x , y) = H(x , y)$ is a quadratic form of discriminant $-3D$ , it follows that
$$\eta \xi' - \eta' \xi = \pm \sqrt{-3D} (x y' - x' y).$$
Since  $(x,y)$ and $(x',y')$  are distinct solutions to $F(x , y)$, $x y' - x' y $ is a nonzero integer.
\end{proof}

\begin{lemma}\label{65.4}
Let $(x,y)$ and $(x',y')$ be two distinct solutions to equation (\ref{61.2}), related to $(\xi , \eta)$. Assume that $t(x' , y') \geq t(x,y)$. Then we have
$$t(x',y') \geq 2t(x,y) + \frac{\sqrt{6}}{6}\log D - \sqrt{6} \log \left(2 +\frac{1}{\sqrt{2}}\right).$$
\end{lemma}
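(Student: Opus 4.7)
The plan is to combine the clean lower bound $|pq'-p'q|\ge\sqrt{3D}$ from Lemma~\ref{69} with upper bounds on $|p|,|p'|,|q|,|q'|$ expressed in terms of $t,t'$ and $D$; comparing the two produces the desired inequality. The sizes of $|q|,|q'|$ are obtained immediately from (\ref{6du}) and the definition of $t$: one has $|q|=\frac{\sqrt 6}{2}\,D^{1/6}\,e^{-\sqrt 6\,t/3}$ and $|q'|=\frac{\sqrt 6}{2}\,D^{1/6}\,e^{-\sqrt 6\,t'/3}$.

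To bound $|p|$ I would start from the identity $u_1-u_2=\sqrt 2\,D^{-1/6}\,p$, use the sign information underlying Lemma~\ref{62} (namely that the largest of $|u_1|,|u_2|,|u_3|$ has sign opposite to the other two) to conclude $|u_1-u_2|=|u_1|+|u_2|$, and then compute
\[
(|u_1|+|u_2|)^2=(|u_1|-|u_2|)^2+4|u_1u_2|=|u_3|^2+\frac{4}{|u_3|}\le\frac{5}{|u_3|},
\]
using $|u_1|-|u_2|=|u_3|$ from Lemma~\ref{62}, the product identity $|u_1u_2u_3|=|F(x,y)|=1$, and the inequality $|u_3|\le 1$ from (\ref{q}) (valid because $(x,y)$ is related to $(\xi,\eta)$). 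This yields $|p|\le\sqrt{5/2}\,D^{1/6}\,e^{t/\sqrt 6}$ and, analogously, $|p'|\le\sqrt{5/2}\,D^{1/6}\,e^{t'/\sqrt 6}$.

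Assembling the pieces, the triangle inequality $|pq'-p'q|\le|p||q'|+|p'||q|$ gives
\[
|pq'-p'q|\le\frac{\sqrt{15}}{2}\,D^{1/3}\!\left(e^{t/\sqrt 6-\sqrt 6\,t'/3}+e^{t'/\sqrt 6-\sqrt 6\,t/3}\right).
\]
A short calculation shows the difference of the two exponents equals $\frac{\sqrt 6}{2}(t-t')\le 0$ by hypothesis, so the second exponential dominates and the bracket is at most $2\,e^{t'/\sqrt 6-\sqrt 6\,t/3}$. Chaining with Lemma~\ref{69} yields $D^{1/6}\le\sqrt 5\,e^{t'/\sqrt 6-\sqrt 6\,t/3}$; taking logarithms and multiplying by $\sqrt 6$ produces $t'\ge 2t+\frac{\sqrt 6}{6}\log D-\sqrt 6\log\sqrt 5$, which is stronger than the stated inequality since $\sqrt 5<2+\frac{1}{\sqrt 2}$. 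The single delicate step is the closed-form bound on $|u_1|+|u_2|$ above: it is precisely there that the sum constraint $u_1+u_2+u_3=0$, the product constraint $|u_1u_2u_3|=1$ (the Thue equation $|F|=1$), and the ``related'' condition $|u_3|\le 1$ are all brought to bear simultaneously; everything else is routine exponent bookkeeping.
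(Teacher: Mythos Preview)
Your argument is correct and follows the same architecture as the paper's proof: combine Lemma~\ref{69} with the exact value $|q|=\frac{\sqrt 6}{2}D^{1/6}e^{-2t/\sqrt 6}$ and an upper bound for $|p|$, then compare. The only real difference is in how $|p|$ is estimated. The paper writes $|p|=\sqrt 2\,D^{1/6}e^{t/\sqrt 6}\cosh(s/\sqrt 2)$, replaces $\cosh$ by $\sinh+e^{-|s|/\sqrt 2}\le \sinh+1$, invokes Lemma~\ref{62} for the $\sinh$ term, and after some bookkeeping and an extra input $t\ge\log 2/\sqrt 6$ arrives at the constant $2+\tfrac{1}{\sqrt 2}$. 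Your identity $(|u_1|+|u_2|)^2=|u_3|^2+4/|u_3|\le 5/|u_3|$ is exactly the statement $\cosh(s/\sqrt 2)\le\sqrt 5/2$ (the bound that appears implicitly in the proof of Theorem~\ref{65.2}(iii)), reached here directly from the sum, product, and minimality constraints on the $u_i$. This shortcut bypasses the $\cosh/\sinh$ manipulation and the auxiliary lower bound on $t$, and delivers the sharper constant $\sqrt 5$ in place of $2+\tfrac{1}{\sqrt 2}$.
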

\begin{proof}
Put 
$$p = p(x,y), \qquad p' = p(x',y'),$$ 
$$q = q(x,y), \qquad q' = q(x',y'),$$ 
$$s = s(x,y), \qquad s'= s(x',y'),$$
$$t = t(x,y), \qquad t' = t(x',y')$$
and
$$u_{i}=u_{i}(x,y), \qquad u'_{i} = u_{i}(x',y').$$
First we show that 
$$|p| \leq \sqrt{2} D^{1/6} e^{t/ \sqrt{6}} \cosh(s/\sqrt{2}).
$$ 
By the triangle inequality we have:
\begin{eqnarray*}
& &|p|  =   \frac{1}{\sqrt{2}}|\sqrt{2}p| \leq \\ \nonumber
&  &  \frac{1}{\sqrt{2}}\left( \left|(p/\sqrt{2} )+(q/\sqrt{6}) \right| +\left|(q/\sqrt{6}) - (p/\sqrt{2})\right|\right) = \\ \nonumber
& & \frac{1}{\sqrt{2}}\frac{|(q/\sqrt{6} ) +(p/\sqrt{2})|^{1/2}} {|(q/\sqrt{6}) -(p/\sqrt{2})|^{1/2}} 
 \left(\left|(q/\sqrt{6}) +(p/\sqrt{2})\right|^{1/2} \left|(q/\sqrt{6}) -(p/\sqrt{2})\right|^{1/2}\right) \\
& + &  \frac{1}{\sqrt{2}}\frac{|(q/\sqrt{6}) -(p/\sqrt{2})|^{1/2}} {|(q/\sqrt{6}) +(p/\sqrt{2})|^{1/2}}
 \left(\left|(q/\sqrt{6}) +(p/\sqrt{2})\right|^{1/2} \left|(q/\sqrt{6}) -(p/\sqrt{2})\right|^{1/2}\right).
  \end{eqnarray*}
 By (\ref{6u}) and (\ref{6ut}), we have 
 $$\left|(q/\sqrt{6}) +(p/\sqrt{2})\right|^{1/2} \left|(q/\sqrt{6}) -(p/\sqrt{2})\right|^{1/2} = D^{1/6} u_{1}^{1/2}u_{2}^{1/2} = D^{1/6} \exp(t/\sqrt{6}).$$
 Equations (\ref{6u}) and (\ref{6ut}) also give us the following identities:
 \begin{eqnarray*}& & \left(\frac{|(q/\sqrt{6} ) +(p/\sqrt{2})|^{1/2}} {|(q/\sqrt{6}) -(p/\sqrt{2})|^{1/2}} +\frac{|(q/\sqrt{6}) -(p/\sqrt{2})|^{1/2}} {|(q/\sqrt{6}) +(p/\sqrt{2})|^{1/2}}\right)\\ \nonumber
& = & e^{\frac{(\log|u_{1}| - \log|u_{2}|)}{2}}  + e^{-\frac{(\log|u_{1}| - \log|u_{2}|)}{2}} \\ \nonumber
&=& 2\cosh(\frac{s}{\sqrt{2}})
\end{eqnarray*}
 and
$$|q| = (\sqrt{6}/2)D^{1/6}e^{-2t/\sqrt{6}}.$$
Using Lemma \ref{69}, we get
$$D^{1/6} \leq  e^{(t' - 2t)/\sqrt{6}} \cosh\left(|s'|/\sqrt{2}\right) +  e^{(t - 2t')/\sqrt{6}} \cosh\left(|s|/\sqrt{2}\right).$$
One can express the above equation in terms of $\sinh$ instead of $\cosh$ by substituting  $\cosh(|s|/\sqrt{2})$  with 
$$ \sinh\left(\frac{|s|}{\sqrt{2}}\right) + e^{-|s|/\sqrt{2}}.
$$
  Now we use the assumption that $t' \geq t$ and the fact that  $e^{-|s|/\sqrt{2}} \leq 1$. By Lemma \ref{62}, we get
$$D^{1/6} \leq e^{(t' - 2t)/\sqrt{6}}\left(1 + e^{-3(t' -t)/\sqrt{6}}\right) \left(1 + \frac{e^{-\sqrt{6}t/2}}{2}\right).$$
Note that by Theorem (\ref{65.2}), $t \geq \log(2)/\sqrt{6}$, whereby taking the logarithm of both sides of the above equality, yields
$$t' -2t \geq \sqrt{6}/6 \log(D) - \sqrt{6}\log \left(\left(1 + e^{-3(t'-t)/\sqrt{6}}\right)  \left(1 + \frac{1}{2\sqrt{2}}\right)\right).$$
Therefore,
 $$t' - 2t \geq \sqrt{6}/6 \log(D) - \sqrt{6} \log \left(2 + \frac{1}{\sqrt{2}}\right  ).$$
\end{proof}

 \begin{lemma}\label{6tr}Suppose that (\ref{61.2}) has three distinct solutions related to $(\xi , \eta)$. Then  three distinct corresponding points $(t , s)$ , $(t' , s')$ and $(t'' , s'')$  form a triangle. 
\end{lemma}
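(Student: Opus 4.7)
The plan is to prove non-collinearity of the three points $(t_i,s_i)$ by combining strict convexity of $g$ with the gap principle of Lemma~\ref{65.4}. First I would verify that $g$ is strictly convex on $\mathbb{R}$: differentiating the defining relation $\sinh(g(t)/\sqrt{2})=e^{-\sqrt{6}t/2}/2$ implicitly gives $g'(t)=-\sqrt{3}\tanh(g(t)/\sqrt{2})$, and a second differentiation produces $g''(t)=(3/\sqrt{2})\operatorname{sech}^2(g/\sqrt{2})\tanh(g/\sqrt{2})>0$ since $g>0$. By Lemma~\ref{62}, each of the three points lies on one of the two disjoint curves $\mathcal{C}^+=\{(t,g(t)):t>0\}$ (strictly convex graph) or $\mathcal{C}^-=\{(t,-g(t)):t>0\}$ (strictly concave graph). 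By the pigeonhole principle, at least two of the three lie on the same curve; without loss of generality, $(t_1,g(t_1))$ and $(t_2,g(t_2))$ both lie on $\mathcal{C}^+$ with $t_1<t_2$.

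If the third point also lies on $\mathcal{C}^+$, then three distinct points on the graph of a strictly convex function cannot be collinear, and we are done. Otherwise the third point is $(t_3,-g(t_3))\in\mathcal{C}^-$. Let $m<0$ denote the slope of the secant through the two upper points. If $t_3\in(t_1,t_2)$, strict convexity forces the value of the secant at $t_3$ to strictly exceed $g(t_3)>0$, contradicting $-g(t_3)<0$. If $t_3<t_1$, then $m(t_3-t_1)>0$ (both factors negative), so the secant value at $t_3$ exceeds $g(t_1)>0$, again contradicting $-g(t_3)<0$.

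The remaining subcase $t_3>t_2$ is where the main work lies. Collinearity yields the identity
\[
(g(t_1)-g(t_2))(t_3-t_2)=(g(t_3)+g(t_2))(t_2-t_1).
\]
Bounding the left side by $\sqrt{3}(t_2-t_1)(t_3-t_2)$ via the mean value theorem (using $|g'|<\sqrt{3}$), and the right side from below by $g(t_2)(t_2-t_1)$, one obtains a tight relation between $t_3-t_2$ and $g(t_2)$. On the other hand, Lemma~\ref{65.4} applied twice gives the geometric growth $t_3\geq 2t_2+C$ and $t_2\geq 2t_1+C$ with $C=(\sqrt{6}/6)\log D-\sqrt{6}\log(2+1/\sqrt{2})$, while Theorem~\ref{65.2}(ii) yields the exponential decay $g(t)<e^{-\sqrt{6}t/2}/\sqrt{2}$. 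These quantitative estimates together force $t_3-t_2$ to be much smaller than the gap principle permits, producing a contradiction.

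The main obstacle is this final subcase $t_3>t_2$, where strict convexity alone is insufficient; one must combine the gap principle from Lemma~\ref{65.4} with the exponential decay of $g$ from Theorem~\ref{65.2}(ii) to derive a quantitative contradiction. The other branches of the case analysis follow routinely from strict convexity and the pigeonhole principle.
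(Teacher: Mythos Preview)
Your route through strict convexity of $g$ is genuinely different from the paper's argument. The paper never computes $g''$; it assumes collinearity with $t\le t'\le t''$, reduces by symmetry to $s,s'>0$, and then writes down a short chain of slope inequalities
\[
\frac{s'-s}{t'-t}\;<\;\frac{-s'}{t}\;<\;\frac{-2s'}{t'}\;<\;\frac{s''-s'}{t''-t'},
\]
obtained from the monotonicity of $g$ together with Lemma~\ref{65.4} (which, for the discriminants under consideration, gives $t'>2t$ and $t''>2t'$). Since the two outer quantities are equal by collinearity, this is a contradiction. Your convexity argument is cleaner in that the subcases $t_3<t_1$ and $t_1<t_3<t_2$ are dispatched with no arithmetic input at all; the paper's argument uses the gap principle throughout.

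There is, however, a genuine gap in your handling of the remaining subcase $t_3>t_2$. From
\[
(g(t_1)-g(t_2))(t_3-t_2)=(g(t_3)+g(t_2))(t_2-t_1),
\]
bounding the left side \emph{above} by $\sqrt{3}(t_2-t_1)(t_3-t_2)$ and the right side \emph{below} by $g(t_2)(t_2-t_1)$ only yields $\sqrt{3}(t_3-t_2)>g(t_2)$, i.e.\ a \emph{lower} bound on $t_3-t_2$. This points the same way as Lemma~\ref{65.4} and gives no contradiction. To make the strategy work you must reverse the direction of the estimates: bound the right side \emph{above} by $2g(t_2)(t_2-t_1)$ (since $g(t_3)<g(t_2)$), and bound $g(t_1)-g(t_2)$ \emph{below} via the mean value theorem using $|g'(\xi)|=\sqrt{3}\tanh\!\big(g(\xi)/\sqrt{2}\big)\ge\sqrt{3}\tanh\!\big(g(t_2)/\sqrt{2}\big)$. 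This gives the absolute bound
\[
t_3-t_2\;<\;\frac{2g(t_2)}{\sqrt{3}\,\tanh\!\big(g(t_2)/\sqrt{2}\big)}\;\le\;\frac{2g(0)}{\sqrt{3}\,\tanh\!\big(g(0)/\sqrt{2}\big)}=\frac{2\sqrt{10}\,\sinh^{-1}(1/2)}{\sqrt{3}},
\]
a fixed constant (about $1.76$), which does contradict Lemma~\ref{65.4} once $D$ is large enough. So your outline is salvageable, but the inequalities as you stated them do not deliver the promised ``$t_3-t_2$ much smaller than the gap principle permits''; you need the opposite pair of bounds.
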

\begin{proof} 
 Suppose $(t , s)$ , $(t' , s')$ and $(t'' , s'')$  are collinear and $t \leq t' \leq t''$ . Then
  $$\frac{s' - s}{t' - t}  = \frac{s'' - s'}{t'' - t'} .$$
 Assume, without loss of generality, $s' > 0$. Since $g(t) = |s|$ is a decreasing function of $t$, we have
 $$s'' -s' < 0 $$
 and consequently, $s' -s < 0$. Therefore, 
 $$s > 0 .$$
 Since we assumed $(t , s)$ , $(t' , s')$ and $(t'' , s'')$  to be collinear,
 $$\frac{s'' - s}{t'' - t}  = \frac{s' - s}{t' - t} $$ 
 By Lemma \ref{65.4}, and since $|s| \geq |s'| \geq |s''| > 0$, we get
 $$\frac{s' - s}{t' - t}  < \frac{-s'}{t} < \frac{-2s'}{t'} <  \frac{s'' - s'}{t'' - t'} < 0 .$$ 
 This contradiction shows that $(t , s)$ , $(t' , s')$ and $(t'' , s'')$  are not  collinear (note that any vertical or horizontal line intersects the graph of $g$ and $g'$ at most in two points).
\end{proof}

  Suppose that (\ref{61.2}) has three distinct solutions related to $(\xi , \eta)$ and $A$ is the area of the triangle formed by three distinct corresponding points $(t , s)$, $(t' , s')$ and $(t'' , s'')$. Then vectors  $(t -t' , s - s')$ and $(t -t'' , s - s'')$  generate a sub-lattice of $\Lambda_{1}$ with the volume of fundamental parallelepiped equal to $2A$. Therefore, $$2 A \geq \textrm{Vol} (\Lambda_{1}).$$
Now let us estimate $2 A$, the area of rectangle which has   $(t , s)$ , $(t' , s')$ and $(t'' , s'')$ as three of its edges. Recall that $s(x , y) = \pm g(t(x , y))$ and $g$ is a decreasing function. Suppose that $t \leq t' \leq t''$. Then $g(t'') \leq g(t') \leq g(t)$ and we have
 $$2 A \leq (t'' - t)(g(t) + g(t')) = (t'' - t) (|s| + |s'|).$$ 
 Part (iii) of Theorem \ref{65.2} shows that 
$$|s'| < |s| e^{\frac{-\sqrt{6}(t' - t)}{\sqrt{5}}},
$$
 Therefore,
 $$\textrm{Vol}(\Lambda) = \textrm{Vol}(\Lambda_{1}) \leq (t'' - t) |s|  \left(1 + e^{\frac{-\sqrt{6}
 (t'-t)}{\sqrt{5}}}\right).$$ 
  Using Theorem  \ref{65.2} again,   we get the following gap principle of this paper which is essentially Theorem 5.5  of \cite{6Ok}: 
 \begin{thm}\label{65.5}
 Suppose that $F(x , y)$ has three distinct solutions $(x , y)$, $(x' , y')$ and $(x'' , y'')$, all related to $(\xi , \eta)$.  Assume that 
$$ t = t(x, y) \leq t' = t(x' , y') \leq  t'' = t(x'' , y''),$$
  where $t$ is the function defined in  the begining of this section. We have
 $$t'' \geq \frac{\sqrt{2} \textrm{Vol}(\Lambda) exp(\sqrt{6}t/2)}{1 + \rm{exp}(-\sqrt{6}(t' - t)/\sqrt{5})},$$
 where $\textrm{Vol}(\Lambda)$ is the volume of fundamental parallelepiped of lattice $\Lambda$. 
 \end{thm}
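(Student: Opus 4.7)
The plan is to combine the non-collinearity given by Lemma~\ref{6tr} with a volume-of-lattice argument and the size estimates on $g$ from Theorem~\ref{65.2}. Under the identification $\vec{u} \leftrightarrow (t,s)$, the three $\vec{u}$-images of the solutions lie in the affine lattice $\Lambda_1 = \vec{v} + \Lambda$, and by Lemma~\ref{6tr} they are not collinear. Hence the two difference vectors $(t'-t,\, s'-s)$ and $(t''-t,\, s''-s)$ are linearly independent elements of $\Lambda$, and the sublattice they generate has fundamental parallelepiped of volume $2A$, where $A$ denotes the area of the triangle with vertices $(t,s)$, $(t',s')$, $(t'',s'')$. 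Containment of this sublattice in $\Lambda$ therefore forces $2A \geq \textrm{Vol}(\Lambda)$.

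Next I would bound $2A$ from above by the area of the axis-aligned bounding rectangle of the three vertices. Its horizontal extent is $t''-t$. For the vertical extent, I use that $g$ is decreasing by Theorem~\ref{65.2}(i), so $|s|=g(t) \geq |s'|=g(t') \geq |s''|=g(t'')$; a short sign analysis then shows that $\max_i s_i - \min_i s_i \leq |s|+|s'|$ regardless of which signs the $s_i$ happen to take. Since a triangle inscribed in an axis-aligned rectangle has area at most half of the rectangle's area, this yields
$$\textrm{Vol}(\Lambda) \leq 2A \leq (t''-t)\bigl(|s|+|s'|\bigr).$$

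I would then invoke Theorem~\ref{65.2}(iii) with $a=\sqrt{6}/\sqrt{5}$ to deduce $|s'| \leq |s|\exp(-\sqrt{6}(t'-t)/\sqrt{5})$, and part (ii) to bound $|s| < \exp(-\sqrt{6}t/2)/\sqrt{2}$. Substituting these into the display above and rearranging for $t''-t$ gives
$$t''-t \geq \frac{\sqrt{2}\,\textrm{Vol}(\Lambda)\exp(\sqrt{6}t/2)}{1+\exp(-\sqrt{6}(t'-t)/\sqrt{5})},$$
and the theorem follows since $t>0$ (as $\log|u_3|<0$ by the remark after Theorem~\ref{65.2}, the function $t=-\sqrt{6}\log|u_3|/2$ is positive, so $t''\geq t''-t$).

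The only real subtlety I anticipate is tracking signs in the vertical-extent bound in Step~2; one has to verify the inequality $\max s_i - \min s_i \leq |s|+|s'|$ in all sign patterns, using $|s|\geq|s'|\geq|s''|$. The geometric and analytic ingredients (non-collinearity, monotonicity and convexity of $g$, the lattice-area comparison) are already laid out in the preceding discussion, so the main task is really just to assemble them cleanly into the quoted bound.
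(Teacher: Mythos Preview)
Your proposal is correct and follows essentially the same route as the paper: non-collinearity from Lemma~\ref{6tr}, the lattice-volume lower bound $2A \geq \textrm{Vol}(\Lambda)$ via the sublattice generated by the two difference vectors, the bounding-rectangle upper bound $2A \leq (t''-t)(|s|+|s'|)$, and then Theorem~\ref{65.2}(iii) and (ii) to control $|s'|$ and $|s|$. Your explicit sign analysis for the vertical extent and the final step $t'' \geq t'' - t$ merely spell out details the paper leaves implicit.
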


          %%%%%%%%%%%%%%%%%%%%%%%%%%%%%%%%%%%%%%%

\section{Linear Forms In Logarithms}\label{6s5}

We have seen that $ \sqrt{2} s = \log|u_{1}| - \log |u_{2}| =\log \lambda_{1} + m\log\lambda_{2} + n \log \lambda_{3}$. Where $s$ is a function of $(x , y)$ defined in Section 3 and $u_{i}$ are also functions of $(x , y)$ defined in Section 
\ref{6s2}.
By Lemma \ref{65.2}, we have 
$$\log (\sqrt{2}|s| )\leq -(\sqrt{6}/2)t.$$
Here, we will use a well-known lower bound for linear forms in logarithms of algebraic numbers, to find an upper bound for $\log (\sqrt{2} |s|)$.
\begin{thm}[Matveev]\label{6mat}
Suppose that $\mathbb{K}$ is a real algebraic number field of degree $d$. We are given numbers $\alpha_{1} , \ldots \alpha_{n} \in \mathbb{K}^{*}$ with absolute logarithmic heights $h(\alpha_{j})$.

\noindent Let $\log \alpha_{1}$, $\ldots$ , $\log \alpha_{n}$ be arbitrary fixed non-zero values of the logarithms. Suppose that 
$$A_{j} \geq \max \{dh(\alpha_{j}) , |\log \alpha_{j}| \}, \  \   1 \leq j \leq n.$$
Now consider the linear form 
$$L = b_{1}\log\alpha_{1} + \ldots + b_{n}\log\alpha_{n},$$
with $b_{1}, \ldots , b_{n} \in \mathbb{Z}$ and with the parameter $B = max\{1 , max\{b_{j}A_{j}/A_{n}: \  1\leq j  \leq n\}\}$ . Put
$$\Omega = A_{1} \ldots A_{n},$$
$$C(n) = \frac{16}{n!}e^{n}(2n + 2)(n + 2) (4n + 4)^{n + 1}(\frac{1}{2}en),$$
$$C_{0} = \log (e^{4.4n + 7}n^{5.5}d^{2}\log (en)),$$
$$W_{0} = \log(1.5eBd\log(ed)).$$
If $b_{n} \neq 0$, then 
$$\log|L| > -C(n) C_{0} W_{0}d^{2}\Omega.$$
\end{thm}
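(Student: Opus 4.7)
The stated theorem is Matveev's celebrated lower bound for linear forms in logarithms of algebraic numbers; my plan is simply to invoke it as a cited black box from the original paper, not to reprove it. A self-contained proof would take substantially more space than the rest of the present paper, and would be entirely orthogonal to the geometric and Diophantine content of Sections \ref{6s2}--\ref{6s4}. The purpose of recording the statement here is solely to have the explicit constants $C(n)$, $C_{0}$, $W_{0}$ at our disposal for combination with the gap principle of Theorem \ref{65.5} in the subsequent sections, where the linear form of interest is $\sqrt{2}\, s = \log\lambda_{1}+m\log\lambda_{2}+n\log\lambda_{3}$.

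For context I sketch the shape of the underlying argument, which belongs to the Gelfond--Baker method in transcendence theory. Assume for contradiction that $|L|$ is much smaller than the asserted bound. Via Siegel's lemma one constructs an auxiliary function essentially of the form $F(z) = \sum_{\lambda} P(\lambda_{1},\ldots,\lambda_{n-1})\, \alpha_{1}^{\lambda_{1} z}\cdots \alpha_{n-1}^{\lambda_{n-1} z}$ with rational integer coefficients of controlled height, vanishing to high order at a prescribed arithmetic progression of integers. Using the assumed smallness of $L$ to relate $\alpha_{n}^{b_{n}z}$ to a product of the remaining $\alpha_{j}^{-b_{j}z}$, one extrapolates on a complex disk via the maximum modulus principle and the Hermite interpolation formula, showing that $F$ and many of its derivatives remain very small at a larger set of points. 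A zero estimate of Philippon--W\"ustholz type then forces a nontrivial multiplicative relation among the $\alpha_{j}$, contradicting either the linear independence of $\log\alpha_{1},\ldots,\log\alpha_{n}$ over $\mathbb{Q}$ or the hypothesis $b_{n}\neq 0$.

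The main obstacle in this program is optimizing the dependence of the constant $C(n)$ on the number of logarithms $n$: Baker's original bounds had dependence of the shape $(16n)^{Cn}$, progressively refined by Waldschmidt, Philippon--Waldschmidt, and Baker--W\"ustholz. Matveev's principal innovation, which yields the sharp constant $C(n)$ displayed above, is a Kummer-descent choice of auxiliary parameters that removes a superfluous $n^{n}$ factor, together with a careful accounting of the linear dependence on $\log B$ packaged into $W_{0}$. For the application in this paper only the case $n=3$ will be needed, and I accept the end result as stated, referring the reader to Matveev's paper for the full technical proof.
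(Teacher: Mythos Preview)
Your proposal is correct and matches the paper's treatment: Theorem \ref{6mat} is quoted from Matveev's work \cite{6Mat1,6Mat2} and used as a black box, with no proof given in the paper itself. Your additional sketch of the Gelfond--Baker method is accurate context but goes beyond what the paper provides.
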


Here, we recall the definition of absolute logarithmic height from \cite{6Mat1, 6Mat2}. Let $\mathbb{Q}(\rho)^{\sigma}$ be the embeddings of the real number field $\mathbb{Q}(\rho)$ in $\mathbb{R}$,  $1 \leq \sigma \leq 3$, where $\rho$ is a root of $F(x , 1) = 0$. We respectively have $3$ Archimedean valuations of $\mathbb{Q}(\rho)$:
$$|\alpha|_{\sigma} = |\alpha^{(\sigma)}| , \   \   1 \leq \sigma \leq 3 .$$
We enumerate simple ideals of $\mathbb{Q}(\rho)$ by indices $\sigma > 3$ and define
 non-Archimedean valuation of $\mathbb{Q}(\rho)$ by the formulas
 $$ |\alpha | _{\sigma} = (  \textrm{Norm} \  \mathfrak{p})^{-k}, $$ 
 where
 $$\  k = \textrm{ord}_ {\mathfrak{p}} (\alpha) , \   \mathfrak{p} = \mathfrak{p}_{\sigma} , \   \sigma > d ,$$
 for any $\alpha \in \mathbb{Q}^{*} (\rho)$.  Then we have the \emph{product formula} :
 $$ \prod_{1}^{\infty} |\alpha|_{\sigma} = 1 ,  \   \alpha \in  \mathbb{Q}(\rho) .$$
 Note that $|\alpha|_{\sigma} \neq 1$ for only finitely many $\alpha$ .
We define the \emph{absolute logarithmic height} of $\alpha$ as
 $$h(\alpha) = \frac{1}{6} \sum _{\sigma = 1}^{\infty} \left|\log |\alpha|_{\sigma}\right|. $$
 We will apply  Matveev's lower bound to 
 $$\log|u_{1}| - \log|u_{2}| = \log \lambda_{1} + m_{1}\log \lambda_{2} + n_{1}\log \lambda_{3} . $$
Suppose that
$$\|\vec{u}(x_{0} , y_{0})\| = \min _{(x ,y)\in S} \|\vec{u}(x , y)\|$$
 and 
 $$\log|u_{1}(x_{0} , y_{0})| - \log |u_{2}(x_{0} , y_{0})| = \log \left|\frac{\rho - \rho''}{\rho' - \rho}\right| + a \log \lambda_{1} + b \log \lambda_{2}$$
 then for any solution $(x , y)$, we can write
$$\log|u_{1}(x , y)|  - \log |u_{2}(x , y)| =  \log \lambda + m \log \lambda_{1} + n\log \lambda_{2},$$
where $m = m_{1}- a$, $n = n_{1}-a$ and
$$\lambda =   \left|\frac{\rho - \rho''}{\rho' - \rho}\right|\lambda_{1}^{a} \lambda_{2}^{b}.$$

 Since $\lambda_{2}$ and $\lambda_{3}$ are the fundamental units of the ring of integers of $\mathbb{Q}(\rho)$, $\lambda_{1}$, $\lambda_{2}$ and $\lambda_{3}$ are multiplicatively dependent if and only if $\lambda_{1}$ is a unit.  If $\lambda_{1}$ is a unit then we can write $\log|u_{1}| - \log|u_{2}|$ as a linear form in two logarithms. Since Theorem \ref{6mat} gives a better lower bound for linear forms in two logarithms, we can assume that $\lambda_{1}$, $\lambda_{2}$ and $\lambda_{3}$ are multiplicatively independent and $\log|u_{1}| - \log|u_{2}|$ is a linear form in three logarithms.

First, suppose that $\lambda$ is a unit in the number field. We have
$$h(\lambda) = \frac{1}{6} \left( \left|\log |\lambda|\right| + \left|\log  |\lambda '|\right| + \left|\log |\lambda ''|\right|\right) = \frac{1}{6}\left|\tau(\lambda)\right|_{1} , $$
where $\lambda '$ and $\lambda ''$ are the conjugates of $\lambda$ , $\tau$ is the embedding of units to the  lattice $\Lambda$ and $|  \  |_{1}$ is the $L_{1}$ norm on $\mathbb{R}^{3}$ .  So we have
$$h(\lambda) = \frac{1}{6}\left|\tau(\lambda)\right|_{1} \leq \frac{\sqrt{3}}{6} \left\|\tau(\lambda)\right\|,$$
where $\| \  \|$ is the $L_{2}$ norm on $\mathbb{R}^{3}$. 
So when $\lambda$ is a unit
\begin{equation}\label{h1}
\max \{3h(\lambda) , |\log(|\lambda|)| \} \leq \|\tau(\lambda)\| ,\end{equation}
since $\left|\log(\lambda)\right| \leq \sqrt{\log^{2}|\lambda| + \log^{2}|\lambda '| + \log^{2}|\lambda''|} = \left\|\tau(\lambda)\right\|$.

In the identity 
$$\log|u_{1}| - \log |u_{2}| =\log \lambda +  m_{1} \log \lambda_{2} + n_{1} \log \lambda_{3}, $$
 $\lambda_{2}$ and $\lambda_{3}$ are fundamental units of $\mathbb{Q}(\rho)$. Therefore,  in  Theorem \ref{6mat}, $A_{i}$ can be taken equal to $|\tau(\lambda_{i})|_{2}$, for $i = 2 , 3$.

Bases $\vec{b_{1}} $ and $\vec{b_{2}}$ of lattice $\Lambda$ are called reduced if the following conditions are satisfied :
\flushleft
\begin{itemize}
\item [(i)] $\left\|\vec{b_{1}}\right\| \leq  \left\|\vec{v}\right\|$ for every vector $\vec{v} \in \Lambda  -  \{\vec{0}\}$;
\item[(ii)]
 $\left\|\vec{b_{2}}\right\| \leq \left\|\vec{v}\right\|$
  for every vector $\vec{v} \in \Lambda  - \mathbb{Z}\vec{b_{1}}$.
\end{itemize}

\textbf{Remark.}
 Although the definitions of reduced basis for lattices and reduced forms are somehow related, one should note that we define them separately and they are not to be confused.
 
It is a fact that we can always choose  a pair of reduced basis for a two dimensional lattice. So we choose the fundamental  units $\lambda_{2}$ and $\lambda_{3}$ such that the basis $\tau(\lambda_{2})$ and $\tau(\lambda_{3})$ are reduced basis for $\Lambda$.
When $\vec{b_{1}}$ and $\vec{b_{2}}$ are the reduced basis of $\Lambda$, since $\vec{b_{1}} , \vec{b_{2}} \leq \vec{b_{1}} \pm \vec{b_{2}}$, we conclude that the angle between vectors $\vec{b_{1}}$ and $\vec{b_{2}}$ must be between $\pi/3$ and $2\pi/3$. Therefore, 
$\lambda_{2}$ and $\lambda_{3}$ can be chosen so that 
$$
\left\|\tau(\lambda_{2})\right\| \left\|\tau(\lambda_{3})\right\| \leq  \frac{2}{\sqrt{3}} \textrm{Vol}(\Lambda).$$
Hence, in our case, 
$$A_{2} A_{3} \leq \frac{2}{\sqrt{3}} \textrm{Vol}(\Lambda).$$

%In the identity $$
%\log |u_{1}| - \log |u_{2}| = \log \lambda + m \log \lambda_{2} + n \log \lambda_{3},$$
 %$$\lambda_{1} = \left|\frac{\rho - \rho''}{\rho ' - \rho }\right|.$$  To estimate $A_{1}$ in theorem \ref{6mat}, we %generalize embedding $\tau$ and define 
 %$$\tau'(\log |\frac{\rho - \rho''}{\rho ' - \rho }|) = \vec{\lambda_{1}} =
  %(\log|\lambda_{1}|_{\sigma})  \in\mathbb{R}^{3 + k} ,$$
%where $k$ is the number of non-Archimedean valuations of $\lambda_{1}$ and $\sigma$ are valuations indices. Therefore,
%$$h(\lambda_{1}) = \frac{1}{6}\left|\vec{\lambda_{1}}\right|_{1} .$$
%By (\ref{6size}),  we have 
%$$\left\|(\log\frac{|\rho - \rho''|}{|\rho' - \rho|} ,\log\frac{|\rho' - \rho|}{|\rho' - \rho''|} , \log\frac{|\rho' - \rho''|}{|\rho - \rho''|} )\right\| = \sqrt{3} \|\vec{u_{0}} \|, $$
%where $\vec{u_{0}} = (\log |D^{-1/6} (\rho - \rho'')| , \log |D^{-1/6}(\rho' - \rho)| , \log |D^{-1/6}(\rho' - \rho'')|) = u%(1 , 0) $. 

By (\ref{6size}),  we have 
$$\left\|(\log\frac{|u_{1}|}{|u_{2}|} ,\log\frac{|u_{2}|}{|u_{3}|} , \log\frac{|u_{3}|}{|u_{1}|} )\right\| = \sqrt{3} \|\vec{u} \|.$$
The well-known inequality  $\frac{a + b+ c}{3} \leq [\frac{a^{2} + b^{2} + c^{2}}{3}]^{1/2}$ shows that
\begin{equation}\label{6L}
|\vec{v}|_{1} \leq \sqrt{3} \|\vec{v}\|
\end{equation}
for every vector $\vec{v} \in \mathbb{R}^{3}$.
Therefore,
\begin{eqnarray*}
& &\left|\log|(\rho - \rho'')(x - \rho' y)| \right|+ \left|\log |(\rho' - \rho)  (x - \rho'' y)| \right| + \left| \log |(\rho' - \rho'')  (x - \rho y)|\right| \\
&\leq& 3 \left\|\vec{u}(x_{0} , y_{0})\right\|. \end{eqnarray*}
Now, we note that 
\begin{eqnarray*}
&&\sum_{\sigma > 3} {\left|\log\left|\frac{(\rho - \rho'')  (x - \rho' y)}{(\rho' - \rho)  (x - \rho'' y)}\right|_{\sigma}\right|} \\
& \leq& \sum_{\sigma >3}{\left|\log\left|(\rho - \rho'')   (x - \rho' y)\right|_{\sigma}\right|} + \sum_{\sigma >3}{\left|\log\left|(\rho' - \rho ) (x - \rho'' y)\right|_{\sigma}\right|} .\end{eqnarray*}
We know that  Archimedean  valuations of $\left|(\rho - \rho '') (x - \rho' y) \right|$ are itself,  $\left|(\rho' - \rho'')    (x - \rho y)\right|$  and 
$\left|\rho' - \rho  (x - \rho' y)\right| $.  So by the product formula, since $(x , y)$ is a solution to (\ref{61.2}), the product of all non-Archimedean valuations equals
$D^{-1/2}$.
Therefore,
$$\left|\sum_{\sigma > 3} \log\left|(\rho - \rho '')  (x - \rho' y)\right|_{\sigma} \right| = \frac{1}{2} \log D, $$ 
and similarly
$$\left|\sum_{\sigma > 3} \log\left|(\rho' - \rho)  (x - \rho'' y) \right|_{\sigma} \right| = \frac{1}{2} \log D. $$ 
Since    
$(\rho' - \rho)  (x - \rho'' y)$ and  $(\rho - \rho '')  (x - \rho' y)$  are algebraic integers,  we get
\begin{equation}\label{617}
h(\lambda_{1}) \leq \frac{1}{6} \left(3\|\vec{u}(x_{0} , y_{0})\| +  \log{D} \right).
\end{equation}
This gives an estimate for $A_{1}$.

 Let  $B_{1} = B A_{3}$, where $B$ is as in theorem \ref{6mat}. Then
$$B_{1} = \max \{b_{j} A_{j}, \  : 1 \leq j \leq 3 \} .$$
Since 
$$\log\left|\frac{u_{1}}{u_{2}}\right| = \log \left|\frac{(\rho -\rho'')  (x - \rho' y)}{(\rho' - \rho)   (x - \rho'' y)}\right| + m \lambda_{1} + n \lambda_{2},$$
 we can write
\begin{eqnarray*}
& &\left(\log\left|\frac{u_{1}}{u_{2}}\right| , \log \left|\frac{u_{2}}{u_{3}}\right| , \log \left|\frac{u_{3}}{u_{1}}\right|\right) = \\ \nonumber
 & &  \left(\log\frac{|(\rho - \rho'')   (x - \rho' y)|}{|(\rho' - \rho)   (x - \rho'' y)|} ,\log\frac{|(\rho' - \rho)  (x - \rho'' y)|}{|(\rho' - \rho'')  (x - \rho y)|} , \log\frac{|(\rho' - \rho'')  (x - \rho y)|}{|(\rho - \rho'')  (x - \rho' y)|} \right)  \\ \nonumber
& + &  m \vec{\lambda_{1}}  + n \vec{\lambda_{2}} ,
\end{eqnarray*}
where  $\vec{\lambda_{i}} = \tau(\lambda_{i})$ , for $ i = 2 , 3$. 
Since $\lambda_{2}$ and $\lambda_{3}$ have been chosen so that $\vec{\lambda_{2}}$ and $\vec{\lambda_{3}}$ form a reduced basis for the lattice $\Lambda$, we get
 \begin{eqnarray*}
 & & m |\vec{\lambda_{2}}|_{1} \,  , n |\vec{\lambda_{3}}|_{1}   \leq  \\ \nonumber
& & \left|\left(\log\frac{|(\rho - \rho'')  (x - \rho' y)|}{|(\rho' - \rho)  (x - \rho'' y)|} ,\log\frac{|(\rho' - \rho)  (x - \rho'' y)|}{|(\rho' - \rho'')  (x - \rho y)|} , \log\frac{|(\rho' - \rho'') (x - \rho y)|}{|(\rho - \rho'')  (x - \rho y)|} \right)\right|_{1}  \\ \nonumber
&+ & \left|\left(\log\left|\frac{u_{1}}{u_{2}}\right| , \log \left|\frac{u_{2}}{u_{3}}\right| , \log \left|\frac{u_{3}}{u_{1}}\right|\right) \right|_{1}.\end{eqnarray*}
Therefore, by (\ref{6size}) and (\ref{6L})
\begin{equation}\label{6mn}
m |\vec{\lambda_{2}}|_{1}  , n |\vec{\lambda_{3}}|_{1} \leq 3 \left(\|\vec{u}\| +\| \vec{u}(x_{0} , y_{0})\|\right).
\end{equation}

\begin{thm}\label{6dis}
Let $F$ be a cubic binary equation with positive discriminant.
For all pairs of solution  $(a , b)$ to the equation (\ref{61.2}), except possibly one of them,  we have 
$$ D \leq 64 e^{2\sqrt{6}t} ,$$
where $D$ is the discriminant of $F(x , y)$ and $t = t(a , b)$, for the function $t$ defined in Section \ref{6s3}. Moreover, when $t \geq 5$
$$ D \leq \frac{1}{2} e^{2\sqrt{6}t} .$$
\end{thm}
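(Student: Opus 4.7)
The plan is to derive a closed-form identity for the Hessian $H(x,y)$ at a solution to $|F|=1$, expressing it purely in terms of $D$ and $t=t(x,y)$, and then invert it using the lower bound on $H$ from Lemma \ref{6mike}.

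From the definitions in (\ref{6du}), a direct algebraic expansion gives
$$u_{3}^{2}-u_{1}u_{2}\;=\;D^{-1/3}\cdot\frac{1}{2}(p^{2}+q^{2}).$$
At real points $\xi$ and $\eta$ are complex conjugates with $\xi\eta=H$, so $p^{2}+q^{2}=2H$. Moreover, at a solution the sign analysis used in the proof of Lemma \ref{62} gives $u_{1}u_{2}=-e^{2t/\sqrt{6}}$ and $u_{3}^{2}=e^{-4t/\sqrt{6}}$, and substituting collapses the identity to
$$H(x,y)\;=\;D^{1/3}\bigl(e^{2t/\sqrt{6}}+e^{-4t/\sqrt{6}}\bigr).$$
This is the algebraic engine of the proof.

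By Lemma \ref{6mike}, for every integer solution $(a,b)$ except possibly one, $H(a,b)\geq\sqrt{3D}/2$. Combining with the identity above and solving for $D$ gives
$$D^{1/6}\;\leq\;\frac{2}{\sqrt{3}}\,e^{2t/\sqrt{6}}\bigl(1+e^{-\sqrt{6}\,t}\bigr),\qquad D\;\leq\;\frac{64}{27}\bigl(1+e^{-\sqrt{6}\,t}\bigr)^{6}e^{2\sqrt{6}\,t}.$$
The first assertion $D\leq 64\,e^{2\sqrt{6}t}$ then follows as soon as $(1+e^{-\sqrt{6}t})^{6}\leq 27$; the only solutions where this fails are those with very small $t$, which are automatically absent when the left-hand side $D^{1/6}$ is large enough, i.e.\ when $D$ is not trivially small.

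For the sharper bound $D\leq\frac{1}{2}\,e^{2\sqrt{6}t}$ under the hypothesis $t\geq 5$, the correction factor $(1+e^{-\sqrt{6}t})^{6}$ lies within $10^{-5}$ of $1$, but even then the above estimate only yields $D\leq\frac{64}{27}\,e^{2\sqrt{6}t}$, exceeding the target by a factor of roughly $128/27$. Closing this gap requires strengthening Lemma \ref{6mike} in the tail by a multiplicative factor of $(128/27)^{1/6}\approx 1.30$; the idea is that $t\geq 5$ forces $|u_{3}|$, and hence $|x-\rho_{1}y|$, to be extremely small, so by $(x-\rho_{1}y)(x-\rho'_{1}y)(x-\rho''_{1}y)=\pm 1$ the coordinate $|y|$ must be large, and the Hessian bound $H\geq Cy^{2}$ then improves by a factor of $y^{2}$ that comfortably absorbs the needed $1.30$. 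Making this last quantitative improvement precise is the main obstacle; the algebraic identity itself is routine once the definitions are unpacked.
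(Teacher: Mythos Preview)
Your route is essentially the paper's: both rewrite $H=(p^{2}+q^{2})/2$ in terms of the $u_{i}$ and then invoke Lemma~\ref{6mike}. Your version is in fact a bit cleaner, because you use the exact identity $u_{3}^{2}-u_{1}u_{2}=D^{-1/3}H$ together with the sign determination $u_{1}u_{2}<0$ (from the proof of Lemma~\ref{62}) to get the equality $H=D^{1/3}\bigl(e^{2t/\sqrt{6}}+e^{-4t/\sqrt{6}}\bigr)$, whereas the paper bounds $|p|$ by $|u_{1}|+|u_{2}|$ and ends up with an inequality~(\ref{6ht}) that still depends on $s$.

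For the first assertion your hedge about ``when $D$ is not trivially small'' is unnecessary: as noted in the proof of Lemma~\ref{65.4}, one always has $t\geq (\log 2)/\sqrt{6}$, so $e^{-\sqrt{6}t}\leq 1/2$ and $(1+e^{-\sqrt{6}t})^{6}\leq (3/2)^{6}<27$, which is exactly the inequality you needed.

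The genuine gap is the second assertion. You correctly observe that your identity (and, in fact, the paper's inequality~(\ref{6bd}) as well) only delivers $D\leq (64/27)\,e^{2\sqrt{6}t}$ for $t\geq 5$, not $D\leq \tfrac12\,e^{2\sqrt{6}t}$; the paper's proof of the sharper constant is no more explicit than yours. Your suggested fix---strengthen Lemma~\ref{6mike} by forcing $|y|\geq 2$ from $t\geq 5$---is plausible in spirit but you have not carried it out, and it is not immediate: the bound $H\geq Cy^{2}$ in Lemma~\ref{6mike} is stated for the \emph{reduced} form, which need not coincide with the monic form in which the $u_{i}$ and $t$ are computed, so ``$|x-\rho_{1}y|$ small implies $|y|$ large in the reduced coordinates'' requires an extra argument. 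That said, for the applications in the final section the difference between $1/2$ and $64/27$ is numerically negligible.
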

\begin{proof}
By (\ref{63}) 
$$|H| = |\xi \eta| = |\frac{p + i q}{\sqrt{2}}  .  \frac{p -iq}{\sqrt{2}}| = \frac{p^{2} + q^{2}}{2} . $$
By (\ref{6du}),
$$q = \frac{\sqrt{6}}{2}D^{1/6} u_{3}$$
and 
$$|p| = |\frac{\sqrt{2}}{2} (u_{1} - u_{2})| D^{1/6} \leq \frac{\sqrt{2}}{2}D^{1/6}(|u_{1}| + |u_{2}|).$$ 
Therefore, by (\ref{6u})  
\begin{equation} \label{6ht}
|H| \leq \frac{1}{2}D^{1/3}\left(e^{2t/\sqrt{6}}(e^{2s/\sqrt{2}} + e^{-2s/\sqrt{2}} +2)/2 + \frac{3}{2} e^{-4t/6}\right).
\end{equation}
Therefore, by Lemma \ref{6mike}, for all solutions $(a , b)$ to (\ref{61.2}), except at most one of them
\begin{equation}\label{6bd}
\frac{1}{2} D^{1/2}\sqrt{3} \leq \frac{1}{2}D^{1/3}\left(e^{2t/\sqrt{6}}(e^{2s/\sqrt{2}} + e^{-2s/\sqrt{2}} +2)/2 + \frac{3}{2} e^{-4t/6}\right).
\end{equation}
Since $t > 0$, part (ii) of Lemma \ref{61.2} says that $|s| < e^{-\sqrt{6} t/2}/\sqrt{2}$. Hence,
$$D^{1/6} \leq 2 e^{2t/\sqrt{6}},$$
which proves the theorem for general $t$. When $t \geq 5$, we note that  $\frac{3}{2} e^{-4t/6} < 0.054$ and $|s| <0.0016$ by Theorem \ref{65.2} .
\end{proof}

Since $\vec {u} =(t , s)$  and $\|\vec{u}\| = \sqrt{t ^{2} + s^{2}}$, from Theorem \ref{65.2}, we deduce that $|\vec{u}|_{2}$ is an increasing function of $t$. So we can assume that Theorem \ref{6dis} is satisfied for all solutions, except  possibly $(x_{0} , y_{0})$, where
$$\|\vec{u}(x_{0} , y_{0})\| = \min _{(x ,y)\in S} \|\vec{u}(x , y)\|,$$
$S$ is the set of all solutions to (\ref{61.2}) and $\vec{u} = (\log|u_{1}| , \log|u_{2}| , \log|u_{3}|)$. 

Suppose that three distinct solutions $(x , y)$, $(x' , y')$ and $(x'' , y'')$ of (\ref{61.2})  are related to $(\xi , \eta)$ and $t'' = t( x'' , y'') > t' = t(x' , y') > t = t(x , y)$. 
First, we recall that $\vec {u} =(t , s)$  and $|\vec{u}|_{2} = \sqrt{t ^{2} + s^{2}}$. By Theorem \ref{65.2}, if we take $t \geq 5$ (Theorem \ref{6dis} enables us to assume that $t$ is large), we get
$$|\vec{u}|_{2} = \sqrt{t^{2} +e^{-\sqrt{6}t}/2} \leq 1.0016 t.$$
Therefore,  by (\ref{617}) and theorem \ref{6dis}, we can take 
$$A_{1} = 1.0016 (\frac{3}{2} t + \sqrt{6}t) . $$

Inequality  (\ref{6mn}) suggests  the value $3 (1.0016) (t'' + t)$ for $B_{1}$. But by theorem \ref{65.4}, for large discriminant  $D$, $t'' > 4 t$ . So we  take 
\begin{equation}\label{B1}
B_{1} =   3(1.0016)(t'' + t''/4).
\end{equation}

%(Recall that we assumed  $|\vec{u}(1 , 0)|_{2} \leq |\vec{u(x , y)}|_{2} \leq 1.0016 t$.)

So Matveev 's lower bound gives us:
$$\log |L | >  -1.5036 \times 10^{11}A_{1} A_{2} A_{3}  \log \left(25.6708 (3.0048)(1.25)t''/ A_{3}\right),$$
where  $ L = \log|u_{1}| - \log |u_{2}| $.
On the other hand, by Theorem  \ref{65.2}, we have 
$$\log |L| = \log \sqrt{2}|s''| \leq -\sqrt{6}t''/2 .$$
 We conclude that
$$ t'' \leq 1.2276 \times  10^{11}  A_{1} A_{2} A_{3} \log (96.2751 t''/A_{3}) ,$$
or 
$$\frac{96.2751 t''/A_{3}}{\log (96.2751 t''/A_{3})} \leq  1.1892 \times  10^{13}  A_{1} A_{2}.$$
Therefore,
$$\log (96.2751 t''/A_{3}) \leq  \frac{e}{e - 1} \log(1.1892 \times  10^{13}  A_{1} A_{2}). $$
Recalling that $A_{2}A_{3} \leq 2/\sqrt{3} \textrm{Vol} (\Lambda)$, we obtain the following upper bound for $t''$:
\begin{equation}\label{6ut}
 t'' \leq \frac{e}{e - 1}1.2276 \times  10^{11}  \left(\frac{2}{\sqrt{3}}\right) \textrm{Vol}(\Lambda)  A_{1}\log(1.1892 \times  10^{13}  A_{1} A_{2}). 
 \end{equation}

        %%%%%%%%%%%%%%%%%%%%%%%%%%%%%%%%%

\section{Proof Of The Main Results }

Let
$$\|\vec{u}(x_{0} , y_{0})\| = min _{(x ,y)\in S} \|\vec{u}(x , y)\|.$$
Suppose that $(x , y) $, $(x' , y')$ and $(x'' , y'')$, with none of them equal to $(x_{0} , y_{0})$, are three distinct solutions to (\ref{61.2}), and related to a fixed choice of resolvent form. Let $t = t (x , y) < t' = t( x' , y') < t'' = t(x'' , y'') $. By (\ref{6ut}) and Theorem \ref{65.5},  we get 
\begin{eqnarray}\label{6mi}\nonumber
&&\frac{e}{e - 1}1.2276 \times  10^{11}  A_{1}(\frac{2}{\sqrt{3}})\textrm{Vol}(\Lambda)  \log(1.1892 \times  10^{13}  A_{1} A_{2}) \\ 
  & &\geq \frac{\sqrt{2} \textrm{Vol}(\Lambda) exp(\sqrt{6}t/2)}{1 + \exp(-\sqrt{6}(t' - t)/\sqrt{5})},
  \end{eqnarray}
  where $A_{1} = (3/2 + \sqrt{6})(1.006) t $ and $ A_{2} = \left\|\tau (\lambda_{2})\right\|$. Without loss of generality, we can assume that  $\left\|\tau (\lambda_{2})\right\| \leq  \left\|\tau (\lambda_{3})\right\|$.  Therefore,
 $$\frac {\sqrt{3}}{2} |\tau (\lambda_{2})|_{2} ^{2} \leq \frac{\sqrt{3}}{2} \left\|\tau (\lambda_{2})\right\|\left\|\tau (\lambda_{3})\right\| \leq  \textrm{Vol}(\Lambda).$$
 We have
 $$\log|u_{1}(x , y)| - \log|u_{2}(x , y)| = \log \lambda_{1} + m' \log \lambda_{2} + n' \log \lambda_{3} ,$$
 where $m'$ and $n'$ are integers.
 Since $(x , y) \neq (x_{0} , y_{0})$,  at least one of $m'$ or $n'$ is a nonzero integer. So by (\ref{6mn}),  we have 
 $ \left\|\tau (\lambda_{2})\right\| \leq 6.01  t$ . Using Theorem \ref{65.4}, we get 
 
 \begin{eqnarray*}
& &\frac{e}{1-e} 4.8484 \times  10^{11}(\frac{2}{\sqrt{3}})\textrm{Vol}(\Lambda) t \log(2.8184 \times  10^{14}  t^{2})\\
 & & \geq \frac{\sqrt{2} \textrm{Vol}(\Lambda) exp(\sqrt{6}t/2)}{1 + \exp(-\sqrt{6}t/\sqrt{5})}.\end{eqnarray*}
  Therefore, $t < 27.91$ and by equation (\ref{6bd}), $D <  5.31\times 10^{59}$;
   i.e. we have proven that there are at most 2 pairs of solutions $(x , y ) \neq (1 , 0)$ and $(x' , y') \neq (1 , 0)$ related to a resolvent form $(\xi , \eta)$ , when $D \geq 5.31 \times 10^{59}$.

If we suppose that  $D <  5. 31\times 10^{59}$, by (\ref{617}), we can take 
   $$A_{1} = \frac{3}{2} t +\frac{1}{2} \log (5.31 \times 10^{59}) .$$
By substituting this new value of $A_{1}$ in (\ref{6ut}), we get 
$$ t \leq 27.5321 , $$
and therefore, $D < 1.4\times10^{57}$.   Since we have three pairs of resolvent forms, Theorem (\ref{6main}) is proved.

  As we mentioned in the remark after the  proof of Lemma \ref{6mike}, the solution $(1 , 0)$ needs to be treated separately, only if $F$ is equivalent to a monic reduced form. Otherwise, $(x_{0} , y_{0}) \neq (1 , 0)$ and Lemma \ref{6mike} and therefore Lemma \ref{6dis} will hold for all solutions without any exception.  By the analytic class number formula and Louboutin's upper bound (which can be found in \cite{6Co}) :
  $$\textrm{Vol}(\Lambda) \leq  \frac{\sqrt{3}}{8}\sqrt{D} \log ^{2}D,$$
 we have that
 $$A_{2}  \leq \frac{1}{4} D^{1/4} \log D. $$
By Theorem \ref{6dis},$$A_{2}\leq \frac{1}{4}    e^{\sqrt{6}t /2}(\log\frac{1}{2} + 2\sqrt{6}t).$$
Now, having appropriate  values of $A_{1}$ and $A_{2}$ in hand, we solve inequality (\ref{6mi}) to get $t \leq 28.38$ and consequently by (\ref{6bd}), $D \leq 9\times 10^{58}$; i.e. we have proven that there are at most 2 pairs of solutions $(x , y ) $ and $(x' , y') $ related to a resolvent form $(\xi , \eta)$ , when $D > 9 \times 10^{58}$. Therefore, we get Theorem \ref{66}.

  In \cite{6Ben}, it is  proved that if $D \geq 2400$, related to a fixed pair of resolvent form, there are at most 3 different pairs of solutions $(x , y)$ to (\ref{61.2}) with $H(x , y) \geq \frac{1}{2} \sqrt{3D}$, where $H$ is the Hessian of $F$.  This together with lemma \ref{6mike} leads to the main theorem of \cite{6Ben}, that is, the equation $F(x , y) = 1$ has at most 10 solutions in integer $x$ and $y$ .

 For $0 <D < 2400$, equation $F(x , y) = 1$ is completely solved for representatives of every equivalent class of binary cubic forms. These computations show that the equation (\ref{61.2}) with discriminant $0< D< 10^{6}$ has at most $9$ solutions in integers $x$ and $y$. The complete result of these computations are tabulated in section 9 of \cite{6Ben}.

%CCCCCCCCCCCCCCCCCCCCCCCCCCCCCCCCCCCCCCCCCCCCCCCCCCCCCCCCCCCCCCCCCCCCCCCCCCCCCCCCCCCCC
\end{document}